\newtheorem{theorem}{Theorem}[section]
\newtheorem{proposition}[theorem]{Proposition}
\newtheorem{remark}[theorem]{Remark}
\newtheorem{lemma}[theorem]{Lemma}
\newtheorem{corollary}[theorem]{Corollary}
\newtheorem{definition}[theorem]{Definition}
\newcommand{\R}{\mathbb R}
\newcommand{\la}{{\lambda}}
\newcommand{\supp}{\textmd{supp}}
\newcommand{\al}{\alpha}
\newcommand{\cR}{\mathbb R}
\newcommand{\de}{\delta}
\newcommand{\ep}{\varepsilon}
\newcommand{\ds}{\displaystyle}
\numberwithin{equation}{section}
\begin{document}

\title[Instablity of solitons in gKdV]
{Instability of solitons - revisited, I:\\
the critical generalized KdV equation}

\author[L. G. Farah]{Luiz Gustavo Farah}
\address{Department of Mathematics\\UFMG\\Brazil}
\curraddr{}
\email{lgfarah@gmail.com}
\thanks{}

\author[J. Holmer]{Justin Holmer}
\address{Department of Mathematics\\Brown University\\ USA}
\email{holmer@math.brown.edu}
\thanks{}

\author[S. Roudenko]{Svetlana Roudenko}
\address{Department of Mathematics\\The George Washington University\\USA}
\curraddr{}
\email{roudenko@gwu.edu}
\thanks{} 

\subjclass[2010]{Primary: 35Q53, 37K40, 37K45, 37K05}

\keywords{Generalized KdV equation, solitons, instability, monotonicity}


\begin{abstract}
We revisit the phenomenon of instability of solitons in the generalized Korteweg-de Vries equation, $u_t + \partial_x(u_{xx} + u^p) = 0$. It is known that solitons are unstable for nonlinearities $p \geq 5$, with the critical power $p=5$ being the most challenging case to handle. This was done by Martel-Merle in  \cite{MM-KdV-instability}, where the authors crucially relied on the pointwise decay estimates of the linear KdV flow. In this paper, we show simplified approaches to obtain the instability of solitons via truncation and monotonicity, which can be also useful for other KdV-type equations.
\end{abstract}



\maketitle


\tableofcontents

\section{Introduction}

In this paper we consider the $L^2$-critical generalized KdV equation:
\begin{equation}
\label{gKdV}
\text{(gKdV)} \qquad \qquad
u_t + u_{xxx} + (u^5)_x = 0,  \qquad x \in \R, \qquad t \in \R.
\end{equation}
During their lifespan, the solutions $u(t, x)$ to \eqref{gKdV} conserve the mass and energy:
\begin{equation*}
M[u(t)]=\int_{\cR} u^2(t)\, dx = M[u(0)]
\end{equation*}
and
\begin{equation*}
E[u(t)]=\dfrac{1}{2}\int_{\cR}|\nabla u(t)|^2\;dx - \dfrac{1}{6}\int_{\cR} u^{6}(t)\;dx = E[u(0)].
\end{equation*}
Also, for solutions
$u(t,x)$ decaying at infinity on $\cR$ the following invariance holds
\begin{equation*}
\int_{\cR} u(t, x) \, dx = \int_{\cR} u(0, x) \, dx,
\end{equation*}
which is obtained by integrating the original equation on $\cR$.

For the existence of solutions, one typically considers the Cauchy problem
\begin{equation}
\label{gKdV-p}
\left\{
\begin{array}{l}
u_t + u_{xxx} + (u^p)_x = 0,  \qquad (x, t) \in \R \times \R \\
u(t,x) = u_0 \in H^s(\R),
\end{array}
\right.
\end{equation}
with $s \geq \R$ and $p$ being an integer (although it is possible to consider continuous power $p$ with the nonlinear term replacement $\partial_x(|u|^{p-1}u)$, however, the equation in the odd power cases would differ slightly from $\partial_x(u^p)$, nevertheless, the well-posedness theory would work the same.
For this paper we are only interested in the initial data in $H^1(\R)$ space, and the local well-posedness sufficient for this case is available from the classical work of Kenig-Ponce-Vega \cite{KPV-93} (in fact, the $L^2$ theory for the equation \eqref{gKdV}, and sharp $H^s$ theory for \eqref{gKdV-p}).

The gKdV equation has a family of travelling wave solutions (often referred as solitary waves or even solitons),
which are of the form
\begin{equation}\label{gKdV-soliton}
u(t,x) = Q_c(x-ct), c>0
\end{equation}
with $Q_c(x) \to 0$ as $|x| \to + \infty$ and $Q_c$ is the dilation of the ground state
$$
Q_c({x}) = c^{1/{4}} Q (c^{1/2} {x}),
$$
where $Q$ is a radial positive solution in $H^1(\cR)$ of the well-known nonlinear elliptic equation
$$
-Q + Q_{xx} + Q^p = 0.
$$
Note that $Q \in C^{\infty}(\R)$, $\partial_r Q(r) <0$ for any $r = |x|>0$ and it is exponentially decaying at infinity 
\begin{equation}\label{prop-Q}
|\partial^\al Q({x})| \leq c(\al)\, e^{- |{x}|} \quad \mbox{for any}\quad {x} \in \cR.
\end{equation}
The questions about stability of travelling waves \eqref{gKdV-soliton} have been one of the key features in the gKdV theory, and have attracted a lot of attention in the last twenty years.

The purpose of this note is to review approaches available to study stability and instability questions in the generalized KdV equation, specifically, in the critical case. The criticality notion comes from the scaling symmetry of the equation \eqref{gKdV-p},
which states that an appropriately rescaled version of the original solution is also a solution of the equation.
For the equation \eqref{gKdV} it is
$$
u_\lambda(t, x)=\lambda^{\frac{1}{2}} u(\lambda^3t, \lambda x).
$$
This symmetry makes the $L^2$-norm invariant, i.e.,
\begin{equation*}
\|u_\lambda(0,\cdot) \|_{L^2}= \|u_0\|_{L^2},
\end{equation*}
and thus, the reference of \eqref{gKdV} as the $L^2$-critical equation. (There are also other symmetries such as translation and dilation.)

The original breakthrough for the critical gKdV equation in obtaining the instability of travelling waves (which later led to the existence of blow-up solutions) was done by Martel and Merle in \cite{M-KdV}, which heavily rely on \cite{MM-KdV1}, for the first blow-up result refer to the paper by Frank Merle \cite{MM-KdV1}, also see \cite{MM-KdV3}-\cite{MM-KdV4}. In other cases ($p \neq 5$) the stability of solitons (as well as asymptotic stability) is known for $p<5$, for example, see \cite{MM-asym}; for classical orbital stability results refer to \cite{GSS} and \cite{BSS}, where it was also shown the instability of solitons for $p>5$. The supercritical case $p>5$ was revisited by Combet in \cite{Combet}, where among other things he gave a nice argument of instability via the so-called monotonicity properties, see \cite[\S 2.3]{Combet}. This note was partially motivated by his argument.

The motivation of this paper is two-fold: one is to revisit the instability in the critical case, as it is the most challenging, and show simplified approaches to obtain it (in the original proof of \cite{MM-KdV-instability} the authors crucially relied on the pointwise decay estimates of the linear shifted KdV flow, and then made a double in time application of them  to the nonlinear problem); here, we show two proofs: one via the truncation and monotonicity, and the second one is only relying on monotonicity. Another purpose of this review is to set the stage for other generalizations of the gKdV equation, for example, for BO, BBM, KP-type equations, including the higher dimensional generalizations such as Zakharov-Kuznetsov (ZK) equation, the supercritical case of which we investigate in part II of this project \cite{FHR2}.

We now give the precise concept of stability and instability of solitons used in this work. For $\alpha>0$, the neighborhood (or ``tube") of radius $\alpha$ around $Q$ (modulo translations) is given by
\begin{equation}\label{tube}
U_{\alpha}=\left\{u\in H^1(\mathbb{R}): \inf_{{y}\in \mathbb{R}}\|u(\cdot)-Q(\cdot+{y})\|_{H^1}\leq \alpha \right\}.
\end{equation}
\begin{definition}[Stability of $Q$]
We say that $Q$ is stable if for all $\alpha>0$, there exists $\delta>0$ such that if $u_0\in U_{\delta}$, then the corresponding solution $u(t)$ is defined for all $t\geq 0$ and $u(t)\in U_{\alpha}$ for all $t\geq 0$.
\end{definition}

\begin{definition}[Instability of $Q$]
We say that $Q$ is unstable if $Q$ is not stable, in other words, there exists $\alpha>0$ such that for all $\delta>0$ such that if $u_0\in U_{\delta}$, then there exists $t_0=t_0(u_0)$ such that $u(t_0)\notin U_{\alpha}$.
\end{definition}

The main result of this paper, which we revisit and show different ways to prove, reads as follows.
\begin{theorem}[$H^1$-instability of $Q$ for the critical gKdV]\label{Theo-Inst}
There exists $0<\alpha_0, b_0<1$ such that if $u_0=Q+\ep_0$, with $\ep_0\in H^1(\R)$ satisfying
$$
\|\ep_0\|^2_{H^1}\leq b_0\int \ep_0 Q, \quad \ep_0 \perp \{Q_{y}, Q^3\}
$$
and
\begin{equation}\label{E:ep0}
|\ep_0(y)|\leq {c} \, e^{-{\delta}|y|} \textrm{ for some } {c}>0 \textrm{ and } {\delta}>0,
\end{equation}
then there exists $t_0=t_0(u_0)$ such that $u(t_0)\notin U_{\alpha_0}$, or explicitly,
$$
\inf_{{y}\in \R}\|u(t_0, \cdot)-Q(\cdot-{y})\|\geq \alpha_0.
$$
\end{theorem}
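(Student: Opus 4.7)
\medskip\noindent\textbf{Proof proposal.}
The plan is a contradiction argument. Suppose $u(t)\in U_{\alpha_0}$ throughout its maximal forward interval $[0,T^*)$. Under this bootstrap, the implicit function theorem (with $\alpha_0$ small enough) produces $C^1$ functions $\lambda(t)>0$ and $x(t)\in\R$ with $\lambda(0)=1$, $x(0)=0$, $\e(0)=\ep_0$, and
$$
u(t,x)=\frac{1}{\lambda(t)^{1/2}}(Q+\e)\!\left(t,\frac{x-x(t)}{\lambda(t)}\right),\qquad \int\e(t)\,Q_y\,dy=\int\e(t)\,Q^3\,dy=0.
$$
Passing to the rescaled time $s$ with $ds/dt=1/\lambda^3$, the perturbation $\e$ satisfies a linearized gKdV equation for the self-adjoint operator $L=-\partial_{yy}+1-5Q^4$, with source $\frac{\lambda_s}{\lambda}\Lambda Q+\bigl(\frac{x_s}{\lambda}-1\bigr)Q_y$ (here $\Lambda=\tfrac{1}{2}+y\partial_y$ is the $L^2$-critical scaling generator). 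Projecting against $Q_y$ and $Q^3$ and using the orthogonality of $\e$ yields modulation bounds of the form $|\lambda_s/\lambda|+|x_s/\lambda-1|\lesssim \|\e\|_{H^1}^2$ plus leading-order contributions involving $J(s):=\int\e(s)\,Q\,dy$.

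The second ingredient is the coercivity/conservation package. Weinstein-type coercivity of the form $(L\e,\e)\gtrsim\|\e\|_{H^1}^2$ under the orthogonality, combined with energy conservation and the $L^2$-critical scaling of $E$, gives an $H^1$-bound for $\e$ in terms of $\lambda(t)$ and $J(t)$. Mass conservation gives the exact identity $J(t)+\tfrac{1}{2}\int\e^2(t)\,dy=\int\ep_0\,Q\,dy+\tfrac{1}{2}\int\ep_0^2\,dy$, so that the hypothesis $\|\ep_0\|_{H^1}^2\leq b_0\int\ep_0\,Q$ propagates, preserving $J(t)>0$ on any interval where $u$ remains in the tube.

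The heart of the proof is an almost-monotonicity estimate that replaces the linear-flow pointwise decay used in \cite{MM-KdV-instability}. Choose a smooth cutoff $\psi_\sigma(t,x)=\psi\!\bigl(\sigma(x-x(t)-\beta t)\bigr)$, supported to the right of the soliton and moving slightly slower than the soliton itself. Testing the equation $u_t+u_{xxx}+(u^5)_x=0$ against $u\,\psi_\sigma$ and integrating by parts yields a differential inequality for $\int u^2\,\psi_\sigma\,dx$ whose error terms, thanks to the exponential decay \eqref{prop-Q} of $Q$ and the assumed decay \eqref{E:ep0} of $\ep_0$, are integrable in time and controlled by the modulation parameters. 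A virial-type identity for the rescaled $\e$ plays the analogous role in the purely monotonicity-based variant, giving the second proof.

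Combining the truncated-mass (or virial) monotonicity with the modulation equations forces the sign of $J(s)$ to drive $\lambda_s/\lambda$ monotonically, so that $\lambda(t)$ must drift away from $1$ at a rate bounded below by a multiple of $\int\ep_0\,Q>0$. For $\alpha_0$ small enough this drift is incompatible with $u(t)\in U_{\alpha_0}$, yielding the desired exit time $t_0$. The main obstacle, I expect, is the calibration of $\psi_\sigma$: its speed $\beta$ and scale $\sigma$ must be chosen compatibly with the a priori unknown translation $x(t)$, the error contributions from $\psi_\sigma'$ must be absorbed via \eqref{prop-Q} and \eqref{E:ep0}, and the localization must not destroy the coercivity of $L$ used to control $\|\e\|_{H^1}$. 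Once this balance is achieved, the contradiction follows cleanly.
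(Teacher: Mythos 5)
Your proposal assembles the right supporting framework: the contradiction/bootstrap in the tube $U_{\alpha_0}$, modulation with orthogonality to $Q_y$ and $Q^3$, coercivity of $L$ combined with mass/energy conservation to control $\|\ep\|_{H^1}$ by $\alpha_0\int\ep_0Q+\|\ep_0\|_{H^1}^2$, and an almost-monotonicity estimate for a truncated mass whose reference point moves slower than the soliton, in place of the pointwise linear decay estimates of Martel--Merle. All of this matches Sections 2--4 of the paper. The gap is in the endgame. You claim that the fixed sign of $\int\ep\,Q$ ``drives $\lambda_s/\lambda$ monotonically, so that $\lambda(t)$ must drift away from $1$.'' This is not justified, and it is precisely the step that fails in the $L^2$-critical case: the modulation equation gives, to leading order, $\frac{\lambda_s}{\lambda}\bigl(\frac14\int Q^4\bigr)\approx\int L\bigl((Q^3)_y\bigr)\ep$, which has no definite sign when $\int\ep Q>0$, and the degeneracy $(\Lambda Q,Q)=0$ noted in Section 2 is exactly the reason the scaling-drift mechanism that works for $p>5$ (Combet) breaks down at $p=5$. (A smaller inaccuracy: your claimed modulation bound $|\lambda_s/\lambda|+|x_s/\lambda-1|\lesssim\|\ep\|_{H^1}^2$ is too strong; the paper obtains, and only needs, a bound linear in $\|\ep\|_{L^2}$, see \eqref{ControlParam}.)

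What the paper actually does at this point is to introduce the specific virial-type functional $J_A(s)=\int\ep(s)F(y)\varphi_A(y)\,dy$ with $F(y)=\int_{-\infty}^{y}\Lambda Q$, together with the renormalization $K_A(s)=\lambda^{1/2}(s)\bigl(J_A(s)-\kappa\bigr)$, $\kappa=\frac14\bigl(\int Q\bigr)^2$. The point of this choice is the identity
\begin{equation*}
\frac{d}{ds}K_A=\lambda^{1/2}\Bigl(2\Bigl(1-\tfrac14\bigl(\tfrac{x_s}{\lambda}-1\bigr)\Bigr)\int\ep Q+R(\ep,A)\Bigr),
\end{equation*}
whose leading term is essentially the conserved quantity $M_0\geq 2\int\ep_0Q>0$; the remainder is absorbed using the smallness of $\alpha_0,b_0$, the largeness of $A$, and --- crucially --- the monotonicity-derived tail bound $\|\ep(s)\|_{L^2(y\geq A)}\lesssim e^{-A/(4M)}$, which beats the factor $A^{1/2}$ produced by the cutoff. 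Since $K_A$ is bounded uniformly in $s$ (trivially by truncation, or by monotonicity alone in the untruncated variant of Section 6), the uniform lower bound $\frac{d}{ds}K_A\geq\frac12\int\ep_0Q$ gives the contradiction. Without identifying this functional (or an equivalent one built from an antiderivative of $\Lambda Q$), your argument does not close: the monotonicity estimate by itself only controls the mass to the right of the soliton and does not by itself produce a quantity that is simultaneously bounded and strictly increasing.
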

\begin{remark}
We note that in the above version of the instability statement, we use the initial data \eqref{E:ep0} with $\ep_0$ decaying exponentially. In the original proof of \cite{MM-KdV-instability} the authors showed the instability on a larger class data, i.e., with $\ep_0$ decaying polynomially of a certain degree specified. For the purpose of showing the instability phenomenon, it is not important how large the set of initial data is, in fact, as it was pointed out in \cite{Combet}, it is sufficient to exhibit one example (for example, a sequence, converging to the solution with the needed properties). On our set of initial data, it is easier to show the $L^2$ exponential decay on the right of the soliton, which gives the monotonicity property, the crucial but simple ingredient in our proof. To be able to show the instability on the polynomially decaying initial data, Martel-Merle had to use the pointwise decay estimates in \cite{MM-KdV-instability}.
\end{remark}

The paper is organized as follows. In Section \ref{S-2} we review the properties of the ground state $Q$ together with the linearized equation around it as well as the modulation theory. In Section \ref{S-3} we discuss the concept of monotonicity, and as a consequence, the $L^2$ exponential decay on the right of the soliton. In Section \ref{S-4} we revisit the virial-type estimates. The next Section \ref{S-5} contains the proof of the instability via truncation and monotonicity, and in the last Section \ref{S-6} we give an alternative proof relying only on monotonicity (and without any truncation).
\medskip

{\bf Acknowledgements.} Most of this work was done when the first author was visiting GWU in 2016-17 under the support of the Brazilian National Council for Scientific and Technological Development (CNPq/Brazil), for which all authors are very grateful as it boosted the energy into the research project.
S.R. would like to thank MSRI for the excellent working
conditions during the semester program ``New Challenges in PDE : Deterministic Dynamics and Randomness
in High and Infinite Dimensional Systems" in the Fall 2015, in particular, she would like to thank Ivan Martel for the discussions on the topic as well as all organizers of the program.
L.G.F. was partially supported by CNPq and FAPEMIG/Brazil.
J.H. was partially supported by the NSF grant DMS-1500106.
S.R. was partially supported by the NSF CAREER grant DMS-1151618.

\section{Preliminaries on $Q$ and the linearization around it} \label{S-2}

We start with considering the canonical parametrization of the solution $u(t,{x})$:
\begin{equation*}
v(t,y) = \la(t)^{1/2} \, u(t, \la(t)y +x(t)),
\end{equation*}
and since we will be studying solutions close to $Q$, we define their difference $\ep = v-Q$ by
\begin{equation*}
\ep(t,{y}) = v(t,{y}) - Q({y}).
\end{equation*}
We rescale the time $t \mapsto s$ by $\ds \frac{ds}{dt} = \frac1{\la^3}$, so $\ep = \ep(s,y)$.

\subsection{The linearized equation around $Q$ and its properties}
We have the following equation for $\ep$.
\begin{lemma}\label{eq-ep}
For all $s \geq 0$, we have
\begin{equation}\label{ep1-order1}
\ep_s = 
(L \ep)_{y}  + \frac{\la_s}{\la} \Lambda Q + \left( \frac{x_s}{\la} -1 \right) Q_{y}
+ \frac{\la_s}{\la} \Lambda \ep + \left( \frac{x_s}{\la} -1 \right) \ep_{y}  -  R(\ep)_{y},
\end{equation}
where the generator $\Lambda$ of scaling symmetry is defined by
\begin{equation}\label{Eq:LambdaQ}
\Lambda f = \frac{1}{2} f + {y} \cdot f_y,
\end{equation}
and $L$ is the linearized operator around $Q$
\begin{equation}\label{Eq:L}
L \ep = - \ep_{xx} + \ep - 5 Q^4 \ep,
\end{equation}
and the higher order in $\ep$ remainder $R(\ep)$ is given by
\begin{equation}\label{R(ep)}
R(\ep)= 10Q^3 \ep^2+10Q^2\ep^3+5Q\ep^4+\ep^5.
\end{equation}
\end{lemma}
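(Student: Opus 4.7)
The plan is a direct computation: differentiate $v(t,y)=\lambda^{1/2}u(t,\lambda y+x)$ in $t$ and $y$, substitute the gKdV equation for $u_t$, rescale time, then set $v=Q+\ep$ and use the profile equation.

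First, I will compute, by the chain rule with $X=\lambda(t)y+x(t)$,
\[
v_t=\tfrac{\lambda_t}{2\lambda}v+\lambda^{1/2}u_t(t,X)+\lambda^{1/2}(\lambda_t y+x_t)u_x(t,X),
\]
together with $v_y=\lambda^{3/2}u_x$, $v_{yyy}=\lambda^{7/2}u_{xxx}$, and $(v^5)_y=\lambda^{7/2}(u^5)_x$. Substituting $u_t=-u_{xxx}-(u^5)_x$ and solving back for the $u$-derivatives in terms of $v$-derivatives yields
\[
v_t+\lambda^{-3}\bigl[v_{yyy}+(v^5)_y\bigr]=\tfrac{\lambda_t}{2\lambda}v+\tfrac{\lambda_t y+x_t}{\lambda}v_y.
\]
Next I apply the rescaling $\partial_t=\lambda^{-3}\partial_s$, multiply through by $\lambda^3$, and use $\lambda_s=\lambda^3\lambda_t$, $x_s=\lambda^3 x_t$, so that in the new time variable
\[
v_s+v_{yyy}+(v^5)_y=\frac{\lambda_s}{\lambda}\Lambda v+\frac{x_s}{\lambda}v_y,
\]
with $\Lambda$ as in \eqref{Eq:LambdaQ}. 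This is the clean rescaled equation for $v$.

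Then I substitute $v=Q+\ep$. The nonlinearity expands by the binomial theorem as $(Q+\ep)^5=Q^5+5Q^4\ep+R(\ep)$, with $R(\ep)$ exactly the remainder given in \eqref{R(ep)}. The profile equation $-Q+Q_{yy}+Q^5=0$, differentiated once in $y$, gives $Q_{yyy}+(Q^5)_y=Q_y$, so the pure-$Q$ terms from $v_{yyy}+(v^5)_y$ collapse to $Q_y$. Splitting $\Lambda v=\Lambda Q+\Lambda\ep$ and $v_y=Q_y+\ep_y$, I obtain
\[
\ep_s=-\ep_{yyy}-(5Q^4\ep)_y-Q_y+\tfrac{\lambda_s}{\lambda}\Lambda Q+\tfrac{x_s}{\lambda}Q_y+\tfrac{\lambda_s}{\lambda}\Lambda\ep+\tfrac{x_s}{\lambda}\ep_y-R(\ep)_y.
\]
Finally I recognize that by \eqref{Eq:L}, $(L\ep)_y=-\ep_{yyy}+\ep_y-(5Q^4\ep)_y$; the extra $\ep_y$ absorbs the $-1$ in the coefficient of both $Q_y$ and $\ep_y$, producing $(\tfrac{x_s}{\lambda}-1)Q_y$ and $(\tfrac{x_s}{\lambda}-1)\ep_y$, which matches \eqref{ep1-order1}.

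There is no conceptual obstacle here, only careful bookkeeping: the only subtle point is realizing that the profile relation $Q_{yyy}+(Q^5)_y=Q_y$ contributes a lone $-Q_y$ which combines with the transport term $\tfrac{x_s}{\lambda}Q_y$ to yield the shifted factor $\bigl(\tfrac{x_s}{\lambda}-1\bigr)$, and similarly rearranging the linear-in-$\ep$ pieces into $(L\ep)_y$ requires adding and subtracting $\ep_y$. Once this is done, the identification with \eqref{ep1-order1} is immediate.
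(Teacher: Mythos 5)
Your computation is correct and is exactly the ``straightforward computation'' the paper alludes to (deferring the details to Martel--Merle): change of variables, time rescaling, substitution of $v=Q+\ep$, the differentiated profile equation $Q_{yyy}+(Q^5)_y=Q_y$, and the add-and-subtract of $\ep_y$ to form $(L\ep)_y$. Every step checks out and the bookkeeping matches \eqref{ep1-order1}, so there is nothing to add.
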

\begin{proof}
It is a straightforward computation, also refer to Martel-Merle \cite[Lemma 1]{MM-KdV-instability}.
\end{proof}

\subsection{Spectral properties of $L$}
It is important to recall the properties of the operator $L = - \partial_{xx} + 1 -5 Q^4 $ (see Kwong \cite{K89} for all dimensions, Weinstein \cite{W85} for dimension 1 and 3, also Maris \cite{M02} and \cite{CGNT}).
\begin{theorem}
The following holds for an operator $L$ defined in \eqref{Eq:L}:
\begin{enumerate}
\item
$L$ is a self-adjoint operator and
$\sigma_{ess}(L) = [ \la_{ess}, +\infty ) \quad \mbox{for~~ some~~} \la_{ess} > 0$
\item
$\ker L = \mbox{span} \{Q_{y} \}$
\item
$L$ has a unique single negative eigenvalue $-8$ associated to a
positive radially symmetric eigenfunction $Q^3$. Moreover, there exists $\de_0 > 0$ such that
$|Q^3(x)| \lesssim e^{- \de_0 |x|}$
for all $x \in \cR$ (which is obvious in the light of \eqref{prop-Q}).
\end{enumerate}
\end{theorem}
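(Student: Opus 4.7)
The plan is to handle the three items in turn, using standard Schrödinger-operator machinery in one dimension.

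For part (1), I would set up $L=-\partial_{xx}+V$ on $L^2(\R)$ with $V=1-5Q^4\in L^\infty(\R)$, domain $H^2(\R)$, form domain $H^1(\R)$. Self-adjointness is immediate because $-\partial_{xx}+1$ is self-adjoint on $H^2(\R)$ and multiplication by the bounded function $5Q^4$ is a bounded symmetric perturbation (or even an infinitesimally small one in the form sense). For the essential spectrum, since $Q^4(x)\to 0$ as $|x|\to\infty$ exponentially by \eqref{prop-Q}, the operator of multiplication by $-5Q^4$ is relatively compact with respect to $-\partial_{xx}+1$ (pre- and post-composition with $(-\partial_{xx}+1)^{-1}$ gives a compact operator via a Rellich-type argument). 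Weyl's theorem then yields $\sigma_{\text{ess}}(L)=\sigma_{\text{ess}}(-\partial_{xx}+1)=[1,\infty)$, so $\lambda_{\text{ess}}=1$.

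For part (2), the inclusion $Q_y\in\ker L$ follows by differentiating the ground-state equation $-Q+Q_{xx}+Q^5=0$ in $x$, which gives $-Q_x+Q_{xxx}+5Q^4 Q_x=0$, i.e., $LQ_x=0$. For the reverse inclusion, $L\phi=0$ is a linear second-order ODE on $\R$, hence its classical solution space is two-dimensional. As $|x|\to\infty$ the coefficient $1-5Q^4$ tends exponentially fast to $1$, so by an ODE perturbation/Levinson-type argument the solutions have asymptotic behavior $\sim e^{\pm x}$; a Wronskian computation shows that at most one linearly independent solution can decay at both $\pm\infty$. Since $Q_y$ provides one such decaying solution, any element of $\ker L\cap H^2(\R)$ must be a scalar multiple of $Q_y$.

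For part (3), I would first verify $LQ^3=-8Q^3$ by direct computation. Writing $(Q^3)_{xx}=6Q Q_x^2+3Q^2 Q_{xx}$ and substituting $Q_{xx}=Q-Q^5$ yields
\[
LQ^3 = -6Q Q_x^2 - 3Q^3 + 3Q^7 + Q^3 - 5Q^7 = -6Q Q_x^2 - 2Q^3 - 2Q^7.
\]
The Pohozaev/first-integral identity for the ODE (multiply $-Q+Q_{xx}+Q^5=0$ by $Q_x$ and integrate, using decay at infinity) gives $Q_x^2=Q^2-\tfrac{1}{3}Q^6$, and substituting this collapses the right-hand side exactly to $-8Q^3$. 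Thus $-8$ is an eigenvalue with eigenfunction $Q^3>0$. To conclude that $-8$ is the unique negative eigenvalue and is simple, I would invoke Sturm oscillation theory for one-dimensional Schrödinger operators: the lowest eigenvalue below $\sigma_{\text{ess}}$ is simple and has a strictly positive eigenfunction, so $Q^3$ being positive forces $-8=\inf\sigma(L)$ and the eigenspace to be one-dimensional. Moreover, the second Sturm eigenfunction must have exactly one zero; since $Q_y$ has exactly one zero (as $\partial_r Q(r)<0$ for $r>0$ and $Q_y(0)=0$ by radial symmetry), it corresponds to the second eigenvalue, which we have already identified as $0$. Consequently there are no eigenvalues in $(-8,0)\cup(0,1)$, finishing part (3). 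The exponential bound $|Q^3(x)|\lesssim e^{-\delta_0|x|}$ is an immediate consequence of \eqref{prop-Q} with $\delta_0=3$ (or any smaller positive constant).

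The main technical point is not any single step but the one-dimensional oscillation argument combined with the exact computation $LQ^3=-8Q^3$: the latter depends on the specific critical power $p=5$ via the Pohozaev identity, and it is what makes the spectral picture explicit rather than merely abstract. Everything else (self-adjointness, Weyl, kernel identification) is routine Schrödinger-operator theory.
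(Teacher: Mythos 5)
Your argument is correct, but it is genuinely different from what the paper does: the paper offers no proof at all for this theorem and simply cites Kwong, Weinstein, Maris, and Chang--Gustafson--Nakanishi--Tsai, whereas you give a self-contained one-dimensional argument. Your key computations check out: differentiating $-Q+Q_{xx}+Q^5=0$ gives $LQ_y=0$; the first integral $Q_x^2=Q^2-\tfrac13 Q^6$ (valid because the integration constant vanishes by decay) turns $LQ^3=-6QQ_x^2-2Q^3-2Q^7$ into exactly $-8Q^3$, which one can also confirm against the explicit profile $Q(x)=3^{1/4}\operatorname{sech}^{1/2}(2x)$. The remaining steps (Kato--Rellich plus Weyl for $\sigma_{ess}(L)=[1,\infty)$; the constancy-of-the-Wronskian argument showing the space of solutions of $L\phi=0$ decaying at $+\infty$ is one-dimensional, hence $\ker L=\mathrm{span}\{Q_y\}$; positivity of $Q^3$ forcing $-8$ to be the simple ground-state eigenvalue via orthogonality to the positive ground state; and $Q_y$ having exactly one zero identifying $0$ as the second eigenvalue, so that $(-8,0)$ contains no spectrum) are all standard and correctly deployed. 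What your route buys is transparency and independence from the cited references, exploiting the fact that in one dimension Sturm oscillation theory and ODE uniqueness make the whole spectral picture elementary; what the paper's citation buys is coverage of the higher-dimensional and general-power settings treated in those references, where the explicit eigenfunction $Q^3$ with eigenvalue $-8$ and the two-dimensional solution space are no longer available and one must instead use the variational characterization of $Q$ and numerically assisted or Perron--Frobenius-type arguments. One cosmetic remark: the statement's phrase ``radially symmetric'' just means even in $x$ here, and your observation that $Q_y(0)=0$ with $\partial_r Q<0$ for $r>0$ is exactly what is needed to count the single zero of $Q_y$.
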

In general, the operator $L$ is not positive-definite, however, the following lemma shows that if certain directions are removed from consideration, then it becomes positive.
\begin{lemma}\label{Lemma-ort3}
For any $f \in H^1(\R)$ such that
\begin{equation}\label{Ort-Cond2}
(f, Q^3) = (f, Q_{x}) =0,
\end{equation}
one has
$$
(Lf,f) \geq \, \|f\|_2^2.
$$
\end{lemma}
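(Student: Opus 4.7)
The plan is to deduce the coercivity from the spectral structure of $L$ recorded in the previous theorem. Since $L$ is self-adjoint, its eigenspaces associated to distinct eigenvalues are $L^2$-orthogonal; hence the two orthogonality conditions $(f,Q^3)=0$ and $(f,Q_x)=0$ precisely annihilate the component of $f$ in the negative eigenspace spanned by $Q^3$ (eigenvalue $-8$) and in $\ker L$ spanned by $Q_x$. By the spectral theorem, $f$ then lies in the positive spectral subspace of $L$, so that
$$
(Lf,f)=\int_{\lambda>0}\lambda\, d(E_\lambda f,f)\;\geq\;\lambda_\star\,\|f\|_2^2,
$$
where $\lambda_\star=\inf(\sigma(L)\cap(0,\infty))$ and $\{E_\lambda\}$ is the spectral family of $L$. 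The orthogonality conditions \eqref{Ort-Cond2} thus reduce the problem to quantifying $\lambda_\star$.

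Next I would show $\lambda_\star\geq 1$. Because $Q$ and all its derivatives decay exponentially by \eqref{prop-Q}, the potential $-5Q^4$ is a relatively compact perturbation of $-\partial_{xx}+1$, and Weyl's theorem yields $\sigma_{ess}(L)=\sigma_{ess}(-\partial_{xx}+1)=[1,\infty)$; in particular $\lambda_{ess}=1$ in item (1) of the preceding theorem. What is left is to exclude positive eigenvalues of $L$ in the open interval $(0,1)$.

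This exclusion is the main obstacle and the only step truly specific to the profile $Q$, the rest being formal spectral theory. I would invoke Sturm oscillation theory for the one-dimensional Schr\"odinger operator $L=-\partial_{xx}+(1-5Q^4)$: the strictly positive eigenfunction $Q^3$ has no zeros and carries the lowest eigenvalue $-8$; the eigenfunction $Q_x$ has exactly one zero and carries eigenvalue $0$; any further eigenfunction below $\sigma_{ess}$ would necessarily have at least two zeros, and a careful ODE analysis tailored to the profile $Q$ precludes this. This analysis is classical and is carried out in Kwong \cite{K89}, Weinstein \cite{W85}, and Maris \cite{M02}, so I would cite it rather than reproduce the argument. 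Once $\lambda_\star\geq 1$ is in hand, the bound $(Lf,f)\geq\|f\|_2^2$ follows immediately from the spectral estimate above.
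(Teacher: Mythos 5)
The paper does not actually prove this lemma---it just cites \cite[Lemma 2]{MM-KdV-instability}---so the comparison is really with that reference, and your argument is the standard one underlying it: the two orthogonality conditions remove the $-8$-eigenspace $\mathrm{span}\{Q^3\}$ and $\ker L=\mathrm{span}\{Q_y\}$, so $f$ lies in the positive spectral subspace and $(Lf,f)\geq \lambda_\star\|f\|_2^2$ with $\lambda_\star=\inf\big(\sigma(L)\cap(0,\infty)\big)$. That part is correct. Note also that for everything the lemma is used for in this paper (absorbing error terms in Lemma \ref{H^1-control}), any positive constant would do, and the weaker bound $(Lf,f)\geq \sigma_0\|f\|_2^2$ with some $\sigma_0>0$ already follows from your first two paragraphs: $0$ is an eigenvalue below $\sigma_{ess}(L)\subset[\lambda_{ess},\infty)$, hence isolated in $\sigma(L)$, so $\lambda_\star>0$ with no further work.

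The soft spot is exactly the step you flag as the crux, namely $\lambda_\star\geq 1$, i.e.\ the absence of eigenvalues of $L$ in $(0,1)$, which is what the stated constant $1$ requires. Your proposed justification---Sturm oscillation plus ``a careful ODE analysis'' attributed to \cite{K89} and \cite{M02}---does not land: those references give uniqueness of $Q$ and the nondegeneracy $\ker L=\mathrm{span}\{Q_y\}$, not the count of bound states, and Sturm theory alone cannot exclude a third eigenvalue whose eigenfunction has two zeros. For the critical gKdV this step has a clean explicit resolution that you should use instead: $Q(x)=3^{1/4}\,\sech^{1/2}(2x)$, so $5Q^4=15\,\sech^2(2x)$ and $L=-\partial_{xx}+1-15\,\sech^2(2x)$ is a P\"oschl--Teller operator with $4\lambda(\lambda+1)=15$, i.e.\ $\lambda=3/2$; its discrete spectrum is exactly $\{-4(3/2)^2+1,\ -4(1/2)^2+1\}=\{-8,0\}$ and $\sigma_{ess}(L)=[1,\infty)$. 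Hence $\lambda_\star=1$ and the constant in the lemma is sharp. This explicit computation (which is what Weinstein's one-dimensional analysis in \cite{W85} rests on) is the correct thing to cite or reproduce; with it in place, your proof is complete.
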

\begin{proof}
See \cite[Lemma 2]{MM-KdV-instability}.
\end{proof}
We also observe that
\begin{equation*}
L(\Lambda Q) = -2Q,
\end{equation*}
where $\Lambda$ is defined in \eqref{Eq:LambdaQ}, furthermore, in this $L^2$-critical case, $(\Lambda Q, Q) = 0$ (this is one of the reasons that the argument of Combet \cite{Combet} to show the instability in the supercritical gKdV does not work in the critical case).

\subsection{Conservation laws for $\ep$}
Our next item is to derive the mass and energy conservation for $\ep$. Denoting
\begin{equation}\label{M_0}
M_0 = 2 \int_{\cR} Q({y}) \ep(0,{y}) \, d{y} + \int_{\cR} \ep^2(0,{y}) \, d{y},
\end{equation}
in other words, $M[Q+\ep(0)] = M_0 + M[Q]$, we have that the following
result holds.
\begin{lemma}
For any $s \geq 0$, the mass and energy of $\ep$ are conserved as follows
\begin{equation}
\label{ep-M+E}
M[\ep(s)] = M_0 \quad \mbox{and} \quad E[Q+\ep(s)] = \la^2(s) \, E[u_0].
\end{equation}
Moreover, the energy linearization is
\begin{align}\label{E-lin}
E[Q+\ep]  +  \left( \int Q \ep + \frac12 \int \ep^2 \right) =& \frac12 (L\ep,\ep) \nonumber \\
&- \frac16 \left( 20 \int Q^3\ep^3 +  15\int Q^2\ep^4+6\int Q\ep^5+\int\ep^6\right),
\end{align}
and if $\|\ep \|_{H^1} \leq 1$, then there exists $c_0 >0$ such that
\begin{equation*}
\left| E[Q+\ep]  +  \left( \int Q \ep + \frac12 \int \ep^2 \right) - \frac12 (L\ep,\ep) \right| \leq c_0 \, \| \ep \|_{H^1} \| \ep \|^2_{L^2}.
\end{equation*}
\end{lemma}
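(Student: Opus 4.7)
The plan is to treat the three claims in turn: the mass and rescaled-energy conservation follow from the conservation laws for $u$ together with the change of variables $x = \la(t) y + x(t)$, while the linearization formula is a purely algebraic expansion that relies on the ground state equation and the Pohozaev identity.

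First I would handle the two conserved quantities. The substitution $v(s,y) = \la^{1/2} u(t, \la y + x(t))$ is an $L^2$-isometry, so $\int v^2(s,y)\,dy = \int u^2(t,x)\,dx = M[u_0]$. Expanding $v = Q + \ep$ and subtracting $M[Q]$ yields $\int(2Q\ep(s) + \ep^2(s))\,dy \equiv M[u_0] - M[Q]$, which by evaluating at $s = 0$ equals $M_0$. For the energy, differentiating the same substitution in $y$ gives $\int v_y^2\,dy = \la^2 \int u_x^2\,dx$ and $\int v^6\,dy = \la^2 \int u^6\,dx$, so $E[Q + \ep(s)] = E[v(s)] = \la^2(s)\, E[u(t)] = \la^2(s)\, E[u_0]$ by energy conservation for \eqref{gKdV}.

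Next I would expand $E[Q + \ep]$ explicitly. The binomial expansion of $(Q+\ep)^6$ produces a constant $E[Q]$, a linear-in-$\ep$ piece $\int Q_x \ep_x - \int Q^5 \ep$, a quadratic-in-$\ep$ piece $\tfrac12\int \ep_x^2 - \tfrac52 \int Q^4 \ep^2$, and the cubic-through-sextic remainder $-\tfrac{10}{3} \int Q^3 \ep^3 - \tfrac{5}{2} \int Q^2 \ep^4 - \int Q \ep^5 - \tfrac{1}{6} \int \ep^6$. Integration by parts combined with the ground state equation $Q_{xx} = Q - Q^5$ gives $\int Q^5 \ep = \int Q\ep + \int Q_x \ep_x$, so the linear piece collapses to $-\int Q\ep$. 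The quadratic piece, when added to $\tfrac12 \int \ep^2$, assembles exactly into $\tfrac12 (L\ep,\ep)$, and the remainder regroups into the displayed sum on the right-hand side. The final constant $E[Q]$ vanishes by the Pohozaev identity in the $L^2$-critical setting, and the identity follows.

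For the error bound the left-hand side is precisely the cubic-through-sextic remainder above. Using the one-dimensional Gagliardo-Nirenberg inequality $\|\ep\|_\infty^2 \lesssim \|\ep\|_{L^2}\|\ep\|_{H^1}$, together with $\|Q\|_\infty < \infty$ and the hypothesis $\|\ep\|_{H^1} \leq 1$, each summand is dominated by $C \|\ep\|_{H^1} \|\ep\|_{L^2}^2$: for example $\int Q^3 |\ep|^3 \lesssim \|\ep\|_\infty \|\ep\|_{L^2}^2 \lesssim \|\ep\|_{L^2}^{5/2} \|\ep\|_{H^1}^{1/2} \leq \|\ep\|_{L^2}^2 \|\ep\|_{H^1}$, and the higher powers of $\ep$ are handled analogously by absorbing extra factors of $\|\ep\|_{H^1}\leq 1$. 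No step is a genuine obstacle; the only structural ingredient beyond coefficient bookkeeping is $E[Q] = 0$, which is what makes the critical case cleanly compatible with the displayed form.
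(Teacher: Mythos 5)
Your proof is correct and is exactly the standard direct computation (change of variables for the conserved quantities, binomial expansion plus the ground state equation for the linearization, and the Pohozaev identity giving $E[Q]=0$ in the critical case) that the paper itself does not reproduce but simply defers to Lemma 3 of Martel--Merle \cite{MM-KdV-instability}. The one structural point you rightly flag, $E[Q]=0$, is indeed what makes the stated identity hold without a constant term, and your Gagliardo--Nirenberg estimate of the cubic-through-sextic remainder is the intended bound.
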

\begin{proof}
See \cite[Lemma 3]{MM-KdV-instability}.
\end{proof}

\subsection{Modulation Theory}
Now that we know how to make the operator $L$ positive (from Lemma \ref{Lemma-ort3},
to enforce the orthogonality conditions \eqref{Ort-Cond2}, we use the modulation (which is proved via the implicit function theorem).
Given $\lambda_1, x_1 \in \R$ and $u\in H^1(\R)$, we define
\begin{equation}\label{ep-def2}
\ep_{\lambda_1,{x}_1}(y) = \lambda_1^{1/2} \, u( \lambda_1y +x_1) - Q(y),
\end{equation}
Observe that if $u$ is small in the sense of definition \eqref{tube}, then it is possible to choose parameters $\lambda_1, x_1\in \mathbb{R}$ such that $\ep_{\lambda_1,{x}_1} \perp Q^3$ and $\ep_{\lambda_1,{x}_1} \perp Q_{y}$.

\begin{proposition}[Modulation Theory]\label{ModThI}
There exists $\overline{\alpha},  \overline{\lambda} >0$ and a unique $C^1$ map
$$
(\lambda_1, {x}_1): U_{\overline{\alpha}}\rightarrow (1-\overline{\lambda}, 1+\overline{\lambda})\times \mathbb{R}
$$
such that if $u\in U_{\overline{\alpha}}$ and $\ep_{\lambda_1,{x}_1}$ is given by \eqref{ep-def2},
then
\begin{equation*}
\ep_{\lambda_1, {x}_1} \perp Q^3 \quad \textrm{and} \quad \ep_{\lambda_1, {x}_1} \perp Q_{y}.
\end{equation*}
Moreover, there exists a constant $C_1>0$, such that if $u\in U_{\alpha}$, with $0<\alpha<\overline{\alpha}$, then
\begin{equation*}
\|\ep_{\lambda_1, {x}_1}\|_{H^1}\leq C_1\alpha \quad \textrm{and} \quad |\lambda_1-1|\leq C_1\alpha.
\end{equation*}
\end{proposition}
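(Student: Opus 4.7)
My plan is to apply the implicit function theorem in the standard modulation-theoretic setup. Define the nonlinear map
$$\Phi: H^1(\R) \times (0,\infty) \times \R \longrightarrow \R^2, \quad \Phi(u, \lambda_1, x_1) = \bigl( (\ep_{\lambda_1,x_1}, Q^3),\; (\ep_{\lambda_1,x_1}, Q_y) \bigr),$$
with $\ep_{\lambda_1, x_1}$ as in \eqref{ep-def2}. Since $\ep_{\lambda_1, x_1}$ depends smoothly on $(\lambda_1, x_1)$ and is affine in $u$, the map $\Phi$ is $C^1$. At the base point $(Q, 1, 0)$ one has $\ep_{1,0} \equiv 0$ and therefore $\Phi(Q, 1, 0) = 0$; the orthogonality requirements of the proposition amount to solving $\Phi(u, \lambda_1, x_1) = 0$ for $(\lambda_1, x_1)$ as a function of $u$.

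The key input is the invertibility of the Jacobian of $\Phi$ in $(\lambda_1, x_1)$ at the base point. Direct differentiation gives $\partial_{\lambda_1} \ep_{\lambda_1, x_1}\big|_{(1,0,Q)} = \Lambda Q$ and $\partial_{x_1} \ep_{\lambda_1, x_1}\big|_{(1,0,Q)} = Q_y$, so the Jacobian is
$$J = \begin{pmatrix} (\Lambda Q, Q^3) & (Q_y, Q^3) \\ (\Lambda Q, Q_y) & (Q_y, Q_y) \end{pmatrix}.$$
Since $Q$ is even and $Q_y$ is odd, both integrands in the off-diagonal entries are odd on $\R$, so $(Q_y, Q^3) = (\Lambda Q, Q_y) = 0$. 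The diagonal entries are strictly positive: $(Q_y, Q_y) = \|Q_y\|_{L^2}^2$, and an integration by parts yields $(\Lambda Q, Q^3) = \tfrac{1}{4}\|Q\|_{L^4}^4$. Hence $\det J > 0$, and the implicit function theorem supplies constants $\overline{\alpha}, \overline{\lambda} > 0$ together with a $C^1$ map $(\lambda_1, x_1)$ defined on an $H^1$-ball around $Q$, taking values in $(1 - \overline{\lambda}, 1 + \overline{\lambda}) \times \R$ and solving $\Phi = 0$, with $(\lambda_1(Q), x_1(Q)) = (1, 0)$.

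To promote this local statement to the tube $U_{\overline{\alpha}}$, I would invoke translation invariance: for any $u \in U_{\overline{\alpha}}$, pick $y \in \R$ nearly achieving the infimum in \eqref{tube}, apply the IFT above to the shifted function $u(\cdot - y)$, and then translate the resulting $x_1$-parameter back by $-y$; uniqueness in the tube follows from local uniqueness in the IFT together with the fact that $Q_y$-orthogonality pins down the translation parameter. The bounds $\|\ep_{\lambda_1, x_1}\|_{H^1} + |\lambda_1 - 1| \leq C_1 \alpha$ for $u \in U_\alpha$ with $\alpha < \overline{\alpha}$ follow directly from the $C^1$-regularity of $(\lambda_1, x_1)(u)$ and the vanishing of $\ep$ at the base point. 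The one conceptual subtlety worth flagging is the necessity of orthogonalizing against $Q^3$ rather than $Q$ itself: in the $L^2$-critical regime $(\Lambda Q, Q) = 0$, so using $Q$ would kill the $(1,1)$-entry of $J$; it is precisely the negative eigenfunction $Q^3$ of $L$ that restores nondegeneracy and makes modulation viable here.
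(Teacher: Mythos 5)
Your argument is correct and is exactly the standard implicit-function-theorem proof that the paper delegates to Martel--Merle (the text preceding the proposition even says it ``is proved via the implicit function theorem''); your Jacobian computation, with the off-diagonal entries vanishing by parity and the diagonal entries $(\Lambda Q,Q^3)=\tfrac14\|Q\|_{L^4}^4$ and $\|Q_y\|_{L^2}^2$ both positive, is the right nondegeneracy check, and your remark about why one must use $Q^3$ rather than $Q$ in the critical case matches the paper's own observation that $(\Lambda Q,Q)=0$. One minor technical point worth tightening: the $H^1$-valued map $(\lambda_1,x_1)\mapsto \ep_{\lambda_1,x_1}$ is not actually $C^1$ for general $u\in H^1$ (translation and dilation are not differentiable in $H^1$), but your scalar map $\Phi$ is $C^1$ because the pairings against the smooth, exponentially decaying functions $Q^3$ and $Q_y$ can be rewritten by a change of variables so that the $(\lambda_1,x_1)$-derivatives land on $Q^3$ and $Q_y$ instead of on $u$.
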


\begin{proof}
See \cite[Proposition 1]{MM-KdV-instability}.
\end{proof}

Now, assume that $u(t)\in U_{\overline{\alpha}}$, for all $t\geq 0$. We define the functions $\lambda$ and $x$ as follows
\begin{definition}\label{eps}
For all $t\geq 0$, let $\lambda(t)$ and $x(t)$ be such that $\ep_{\lambda(t), x(t)}$ defined according to equation \eqref{ep-def2} satisfy
\begin{equation*}
\ep_{\lambda(t), x(t)} \perp Q^3 \quad \textrm{and} \quad \ep_{\lambda(t), x(t)} \perp Q_{y}.
\end{equation*}
In this case we also define
\begin{equation*}
\ep(t)=\ep_{\lambda(t), x(t)}=\lambda^{1/2}(t) \, u( \lambda(t)y +x(t)) - Q(y).
\end{equation*}
\end{definition}

\subsection{Estimates on Parameters}
To get a more precise control of the parameters $x(t)$ and $\lambda(t)$, we again rescale the time $t \mapsto s$ by $\frac{ds}{dt} = \frac1{\la^3}$. Indeed, the following proposition provides us with the equations and estimates for $\dfrac{\lambda_s}{\lambda}$ and $\left(\dfrac{x_s}{\lambda}-1\right)$.

\begin{lemma}[Modulation parameters]\label{Lemma-param}
There exists $0<\alpha_1<\overline{\alpha}$ such that if for all $t\geq 0$, $u(t)\in U_{\alpha_1}$, then $\lambda$ and ${x}$ are $C^1$ functions of $s$ and they satisfy the following equations
\begin{eqnarray*}
\frac{\lambda_s}{\lambda}\left(\frac14 \int Q^4-\int y(Q^3)_y\ep\right) - \left(\frac{x_s}{\lambda}-1\right)\int (Q^3)_{y}\ep \nonumber \\
=\int L((Q^3)_{y})\ep- \int (Q^3)_{y}R(\ep),
\end{eqnarray*}
and
\begin{eqnarray*}
-\frac{\lambda_s}{\lambda}\int yQ_{yy}\ep- \left(\frac{x_s}{\lambda}-1\right)\left(\frac12\int Q^2- \int Q_{yy}\ep\right) \nonumber \\
=20\int Q^3 Q^2_y\ep - \int Q_{yy}R(\ep).
\end{eqnarray*}

Moreover, there exists a universal constant $C_2>0$ such that\\
if $\|\ep(s)\|_2\leq \alpha$ for all $s\geq 0$, where $\alpha<\alpha_1$, then
\begin{equation}\label{ControlParam}
\left|\frac{\lambda_s}{\lambda}\right|+\left|\frac{x_s}{\lambda}-1\right|\leq C_2\|\ep(s)\|_2.
\end{equation}
\end{lemma}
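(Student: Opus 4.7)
The plan is to differentiate the orthogonality conditions $\int \ep(s,y) Q^3 \, dy = 0$ and $\int \ep(s,y) Q_y \, dy = 0$ in $s$, substitute the evolution equation \eqref{ep1-order1} for $\ep_s$, and view the outcome as a $2 \times 2$ linear system in the unknowns $\la_s/\la$ and $x_s/\la - 1$. The $C^1$ regularity of $\la(s)$ and $x(s)$ follows from the $C^1$ character of the modulation map granted by Proposition~\ref{ModThI} combined with the time change $ds/dt = 1/\la^3$, once the continuous dependence of the flow in $t$ is invoked.

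To obtain the first displayed identity, I would pair \eqref{ep1-order1} with $Q^3$ and integrate by parts term by term. The key computations are: $\int \Lambda Q \cdot Q^3 = \frac{1}{4} \int Q^4$ (using $y Q_y Q^3 = \frac{1}{4} y (Q^4)_y$ and integrating by parts); $\int Q_y Q^3 = 0$; $\int (L\ep)_y \, Q^3 = - \int \ep \, L((Q^3)_y)$ by self-adjointness of $L$; and $\int \Lambda \ep \cdot Q^3 = - \int y (Q^3)_y \, \ep$, where the zero-order piece $\frac{1}{2} \int \ep Q^3$ drops by the orthogonality assumption. The $R(\ep)_y$ contribution transfers one derivative onto $Q^3$. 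Collecting these reproduces the first equation. For the second, I would pair with $Q_y$. Parity of $Q$ makes $\int \Lambda Q \cdot Q_y = 0$; the dispersive term gives $\int (L\ep)_y Q_y = - \int \ep \, L(Q_{yy}) = - 20 \int \ep \, Q^3 Q_y^2$, where the identity $L(Q_{yy}) = 20\, Q^3 Q_y^2$ is obtained by differentiating $LQ_y = 0$ once in $y$; and the leading coefficient of $x_s/\la - 1$ is $\int Q_y^2 = \frac{1}{2} \int Q^2$, via the Pohozaev identity for the critical power (obtained by testing $-Q + Q_{yy} + Q^5 = 0$ against $Q$ and against $y Q_y$ and combining). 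The remaining terms are treated exactly as in the first equation.

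For the estimate \eqref{ControlParam}, the two identities form a linear system whose $2 \times 2$ matrix has diagonal entries $\frac{1}{4} \int Q^4 + O(\|\ep\|_2)$ and $\frac{1}{2} \int Q^2 + O(\|\ep\|_2)$ and off-diagonal entries of size $O(\|\ep\|_2)$; for $\alpha_1$ small enough it is uniformly invertible by a continuity argument on the determinant. The right-hand side is controlled by $\|\ep\|_2$ through Cauchy--Schwarz against the exponentially localized profiles $L((Q^3)_y)$, $Q^3 Q_y^2$, $Q_{yy}$, and $(Q^3)_y$, all in $L^2$ thanks to \eqref{prop-Q}. The step I expect to be the most delicate is the control of the nonlinear integrals $\int R(\ep) Q_{yy}$ and $\int R(\ep) (Q^3)_y$ with $R(\ep) = 10 Q^3 \ep^2 + 10 Q^2 \ep^3 + 5 Q \ep^4 + \ep^5$: the quadratic piece $\int Q^3 \ep^2 \cdot Q_{yy}$ is bounded by $\|\ep\|_2^2$ since $Q^3 Q_{yy} \in L^\infty$, and the higher powers are handled using $\|\ep\|_{H^1} \leq C_1 \alpha_1$ from Proposition~\ref{ModThI} together with the embedding $H^1 \hookrightarrow L^\infty$, each yielding $O(\|\ep\|_2^2)$. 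Since $\|\ep\|_2^2 \lesssim \alpha_1 \|\ep\|_2$, the right-hand side is $O(\|\ep\|_2)$, and inverting the matrix delivers \eqref{ControlParam}.
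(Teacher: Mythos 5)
Your proposal is correct and is essentially the same argument as the paper's: the paper does not prove this lemma itself but cites Martel--Merle (Lemmas 4 and 12 of \cite{MM-KdV-instability}), and the proof there is exactly what you describe --- differentiate the two orthogonality conditions, substitute \eqref{ep1-order1}, integrate by parts using $\int \Lambda Q\, Q^3=\frac14\int Q^4$, $\int Q_y^2=\frac12\int Q^2$ (Pohozaev) and $L(Q_{yy})=20\,Q^3Q_y^2$, and invert the resulting $2\times2$ system whose determinant is $\frac18\int Q^4\int Q^2+O(\|\ep\|_2)$. All of your key identities check out (the only discrepancy is an overall sign on the $\left(\frac{x_s}{\la}-1\right)$ coefficient in the second identity, which is immaterial for \eqref{ControlParam}).
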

\begin{proof}
See \cite[Lemmas 4 and 12]{MM-KdV-instability}.
\end{proof}
We are now ready to discuss the approach of monotonicity.

\section{Monotonicity}\label{S-3}
As it was remarked in \cite{M-KdV}, the solution $u(x,t)$ has the mass around its center $x(t)$ versus the mass to the right of the soliton being a time decreasing function. The solution to the right of the soliton is small in $L^2$ sense. To demonstrate that, we first introduce the $L^2$ localization functional $I$, see \eqref{Eq:I}, and then show the exponential decay of $L^2$ norm to the right of the soliton.
For $M\geq 4$, denote
$$
\psi(x)=\frac{2}{\pi}\arctan{(e^{\frac{x}{M}})}.
$$
It is easy to check the following properties:
\begin{enumerate}
\item
$\displaystyle \psi(0)=\frac{1}{2}$,
\item
$\displaystyle \lim_{x\rightarrow -\infty} \psi(x)=0$ and $\displaystyle \lim_{x\rightarrow +\infty} \psi(x)=1$,
\item
$\displaystyle \psi^{'}(x)=\left(\pi M \cosh\left(\frac{x}{M}\right)\right)^{-1}$,
\item $\displaystyle \left| \psi^{\prime\prime\prime}(x)\right|\leq \frac{1}{M^2}\psi^{\prime}(x)\leq \frac{1}{16}\psi^{\prime}(x)$.
\end{enumerate}

Let $x(t)\in C^1(\R, \R)$, and for $x_0, t_0>0$ and $t\in [0,t_0]$ define the functional
\begin{equation}\label{Eq:I}
I_{x_0,t_0}(t)=\int u^2(t, x) \,\psi\left(x-x(t_0)+\frac{1}{2}(t_0-t)-x_0\right)dx,
\end{equation}
where $u\in C(\R, H^1(\R))$ is a solution of the critical gKdV equation \eqref{gKdV}, satisfying
\begin{equation}\label{u-Q}
\|u(\cdot+{x}(t))-Q\|_{H^1}\leq \alpha \quad \mbox{for ~ some} \quad \alpha>0.
\end{equation}

While the next property is basically known, we provide it here for completeness.
\begin{lemma}[Almost Monotonicity]\label{AM}
Let $M\geq 4$ be fixed and assume that $x(t)$ is an increasing function satisfying $x(t_0)-x(t)\geq \frac{3}{4}(t_0-t)$ for every $t_0, t\geq 0$ with $t\in [0,t_0]$. Then there exist $\alpha_0>0$ and $\theta=\theta(M)>0$ such that if $u\in C(\R, H^1(\R))$ verify \eqref{u-Q} with $\alpha<\alpha_0$, then for all $x_0>0$ and $t_0, t\geq 0$ with $t\in [0,t_0]$, we have
$$
I_{x_0,t_0}(t_0)-I_{x_0,t_0}(t)\leq \theta e^{-\frac{x_0}{M}}.
$$
\end{lemma}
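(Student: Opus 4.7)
The plan is to differentiate $I_{x_0,t_0}(t)$ in time, isolate the dissipative terms via integration by parts, and control the remaining nonlinear contribution by localizing according to where the weight's transition sits relative to the soliton core. Writing $\widetilde\psi(t,x) := \psi(x - x(t_0) + \tfrac12(t_0-t) - x_0)$, one has $\partial_t \widetilde\psi = -\tfrac12 \widetilde\psi'$. Substituting $u_t = -u_{xxx} - (u^5)_x$ into $\frac{d}{dt}\int u^2 \widetilde\psi\,dx$ and integrating by parts (twice on the dispersive term, once on the nonlinear one, using $\int 2uu_{xxx}\widetilde\psi = 3\int u_x^2 \widetilde\psi' - \int u^2 \widetilde\psi'''$ and $\int 2u(u^5)_x \widetilde\psi = -\tfrac{5}{3}\int u^6 \widetilde\psi'$) gives
\begin{equation*}
\frac{dI_{x_0,t_0}}{dt} = -3\int u_x^2\, \widetilde\psi' - \frac{1}{2}\int u^2\, \widetilde\psi' + \int u^2\, \widetilde\psi''' + \frac{5}{3}\int u^6\, \widetilde\psi'.
\end{equation*}
Property~(4) of $\psi$ immediately bounds the third-derivative term by $\tfrac1{16}\int u^2 \widetilde\psi'$, which combined with $-\tfrac12\int u^2 \widetilde\psi'$ gives the good coercive upper bound $-\tfrac{7}{16}\int u^2 \widetilde\psi'$ (in addition to the already-negative $-3\int u_x^2 \widetilde\psi'$).

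The main work is to control $\tfrac{5}{3}\int u^6 \widetilde\psi'$, which I plan to do by splitting $\R$ at $x_\star := x(t) + \tfrac{1}{4}(t_0-t) + \tfrac{x_0}{2}$. On $\{x < x_\star\}$, the speed hypothesis $x(t_0)-x(t)\geq \tfrac34(t_0-t)$ forces the argument of $\widetilde\psi'$ to lie below $(x-x_\star) - \tfrac{x_0}{2} - \tfrac{1}{4}(t_0-t)$, so property~(3) and $1/\cosh(y)\leq 2 e^{-|y|}$ yield the pointwise bound
\begin{equation*}
\widetilde\psi'(t,x) \leq C e^{-x_0/(2M)}\, e^{-(t_0-t)/(4M)}\, e^{(x-x_\star)/M},
\end{equation*}
hence $\int_{x<x_\star} u^6 \widetilde\psi' \leq C e^{-x_0/(2M)} e^{-(t_0-t)/(4M)} \|u\|_{H^1}^6$. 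On $\{x \geq x_\star\}$, we have $x-x(t) \geq \tfrac{x_0}{2}+\tfrac14(t_0-t)$, so \eqref{prop-Q} combined with \eqref{u-Q} and Sobolev embedding gives $\|u\|_{L^\infty(\{x\geq x_\star\})}^4 \leq C(\alpha^4 + e^{-2x_0-(t_0-t)})$. For $\alpha_0$ small the $C\alpha^4$-contribution is absorbed into $\tfrac{7}{16}\int u^2 \widetilde\psi'$, while the remaining part is bounded by $C e^{-2x_0} e^{-(t_0-t)}\int u^2 \widetilde\psi' \leq C e^{-2x_0} e^{-(t_0-t)}$.

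Integrating over $[t,t_0]$, both remainders come with an integrable time-decay factor $e^{-c(t_0-t)/M}$, producing a net bound of the form $\theta(M)\, e^{-x_0/M}$ after absorbing constants into $\theta$; the case where $x_0$ lies below any fixed $M$-dependent threshold is handled by the crude bound $I_{x_0,t_0}(t_0) - I_{x_0,t_0}(t) \leq M[u(t_0)]$ by enlarging $\theta$ accordingly. The main obstacle is precisely this time integration: without the extra $e^{-c(t_0-t)/M}$ factors on each region---which arise directly from the soliton/weight speed gap $\tfrac{3}{4}-\tfrac{1}{2}=\tfrac{1}{4}$---the pointwise-in-time bound would produce an unusable $(t_0-t)$ prefactor upon integration. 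Tracking this gap throughout, and exploiting the exponential decay of $Q$ on the far side of $x_\star$ together with the exponential tail of $\widetilde\psi'$ on the near side, is the structural heart of the argument.
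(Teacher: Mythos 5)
Your opening computation and the use of property (4) to absorb $\int u^2\widetilde\psi'''$ match the paper exactly; where you diverge is in controlling $\tfrac53\int u^6\widetilde\psi'$. The paper first decomposes $u^6=Q(\cdot-x(t))u^5+(u-Q(\cdot-x(t)))u^5$, absorbs the second piece using the $\alpha$-smallness, and only then splits the first piece at a \emph{fixed} radius $R_0$ around $x(t)$ (far: decay of $Q$; near: decay of $\psi'$, using the full $\tfrac14(t_0-t)+x_0-R_0$ margin). You instead split space once at a moving point $x_\star$ depending on $x_0$ and $t_0-t$. Both routes rest on the same three mechanisms, so in spirit this is the paper's argument with different bookkeeping.

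However, that bookkeeping contains a genuine error on the region $\{x<x_\star\}$. With your choice $x_\star=x(t)+\tfrac14(t_0-t)+\tfrac{x_0}{2}$, the speed hypothesis gives for the argument $z$ of $\widetilde\psi'$ only
\begin{equation*}
z \;\le\; x-x(t)-\tfrac14(t_0-t)-x_0 \;=\; (x-x_\star)-\tfrac{x_0}{2},
\end{equation*}
\emph{not} $(x-x_\star)-\tfrac{x_0}{2}-\tfrac14(t_0-t)$: the entire $\tfrac14(t_0-t)$ margin is consumed in translating $x(t)$ to $x_\star$, so the factor $e^{-(t_0-t)/(4M)}$ you claim on this region is not there. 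You yourself identify this factor as the structural heart of the argument, and indeed without it the left-region contribution integrates to $C(t_0-t)e^{-x_0/(2M)}\|u\|_{H^1}^6$, which is useless as $t_0-t\to\infty$ (the application takes $t=0$, $t_0\to\infty$). The fix is to split the margin, e.g. $x_\star=x(t)+\tfrac18(t_0-t)+\tfrac{x_0}{2}$, which restores $e^{-(t_0-t)/(8M)}$ on the left while keeping $x-x(t)\ge\tfrac18(t_0-t)+\tfrac{x_0}{2}$ on the right; alternatively follow the paper and split at a fixed $R_0$. Note also that any split placing only $\tfrac{x_0}{2}$ into $x_\star$ yields a final bound $\theta e^{-x_0/(2M)}$ rather than the stated $\theta e^{-x_0/M}$ — harmless for Lemma \ref{AM2} (which in any case degrades the rate), but strictly weaker than the lemma as written, whereas the paper's fixed-$R_0$ split preserves the full rate at the cost of an $e^{R_0/M}$ constant.
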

\begin{proof}
Using the equation and the fact that $ \left| \psi^{\prime\prime\prime}(x)\right|\leq \frac{1}{M^2}\psi^{\prime}(x)\leq \frac{1}{16}\psi^{\prime}(x)$, we obtain the following bound on the derivative of the functional $I_{x_0,t_0}(t)$:
\begin{align}\label{Ix0t0}
\nonumber
\frac{d}{dt} I_{x_0,t_0}(t) \leq &2 \int uu_t \psi - \frac{1}{2} \int u^2\psi^{\prime}\\
\nonumber
\leq & -\int \left(3u_x^2-\frac{5}{3}u^6\right)\psi^{\prime}+\int u^2\psi^{\prime\prime\prime}-\frac{1}{2}\int u^2\psi^{\prime}\\
\leq & -\int \left(3u_x^2+\frac{1}{4}u^2\right)\psi^{\prime}+\frac{5}{3}\int u^6\psi^{\prime}.
\end{align}
Estimating the last term on the right hand side of the last inequality, we view it in terms of closeness to $Q$ and write
\begin{align}\label{RHS}
\int u^6\psi^{\prime}=\int Q(\cdot-x(t))u^5\psi^{\prime}+\int (u-Q(\cdot-x(t)))u^5\psi^{\prime}.
\end{align}
Application of the Sobolev Embedding $H^1(\R)\hookrightarrow L^{\infty}(\R)$ in the second term above yields
\begin{align}\label{RHS1}
\nonumber
\int (u-Q(\cdot-x(t)))u^5\psi^{\prime}\leq& \|u-Q(\cdot-x(t))\|_{\infty}\|u\|^3_{\infty}\int u^2\psi^{\prime}\\
\leq & c \alpha\|Q\|^3_{H^1} \int u^2\psi^{\prime}.
\end{align}
For the first term on the right hand side of \eqref{RHS}, we divide the integration into two regions $|x-x(t)|\geq R_0$ and $|x-x(t)|< R_0$, where $R_0$ is a positive number to be chosen later. Therefore, since $Q(x)\leq ce^{-|x|}$, we get
\begin{align*}
\nonumber
\int_{|x-x(t)|\geq R_0} Q(\cdot-x(t))u^5\psi^{\prime}
\leq & c \,e^{-R_0}\|u\|^3_{\infty}\int u^2\psi^{\prime}\\
\leq & c \, e^{-R_0}\|Q\|^3_{H^1} \int u^2\psi^{\prime}.
\end{align*}
When $|x-x(t)|< R_0$, we estimate
\begin{align*}
\nonumber
\left|x-x(t_0)+\frac{1}{2}(t_0-t)-x_0\right| \geq& (x(t_0)-x(t)+x_0)-\frac{1}{2}(t_0-t)-|x-x(t)|\\
\geq & \frac{1}{4}(t_0-t)+x_0-R_0,
\end{align*}
where in the first inequality we used that $x(t)$ is increasing, $t_0\geq t$ and $x_0>0$, to compute the modulus of the first term and in the second line we used the assumption $x(t_0)-x(t)\geq \frac{3}{4}(t_0-t)$.

Now, since $\psi^{\prime}(z)\leq c \,e^{-\frac{|z|}{M}}$, we deduce that
\begin{align}\label{RHS3}
\nonumber
\int_{|x-x(t)|\leq R_0} Q(\cdot-x(t))u^5\psi^{\prime}
\leq & \|u\|^5_{\infty}\|Q\|_1e^{\frac{R_0}{M}}e^{-\frac{\left(\frac{1}{4}(t_0-t)+x_0\right)}{M}}\\
\leq & c \,\|Q\|^5_{H^1}\|Q\|_1e^{\frac{R_0}{M}}e^{-\frac{\left(\frac{1}{4}(t_0-t)+x_0\right)}{M}}.
\end{align}
Therefore, choosing $\alpha$ such that $c \, \alpha\|Q\|^3_{H^1}<\frac{3}{5} \cdot \frac{1}{4}$ and $R_0$ such that $c \, \|Q\|^3_{H^1}e^{-R_0}<\frac{3}{5} \cdot \frac{1}{4}$, collecting \eqref{RHS1}-\eqref{RHS3}, we obtain
$$
\frac{5}{3}\int u^6\psi^{\prime}\leq \frac{1}{8}\int u^2\psi^{\prime}+c \, \|Q\|^5_{H^1}\|Q\|_1e^{\frac{R_0}{M}}\, e^{-\frac{\left(\frac{1}{4}(t_0-t)+x_0\right)}{M}}.
$$
Inserting the previous estimate into \eqref{Ix0t0}, we get that there exists $C>0$ such that
\begin{align*}
\nonumber
\frac{d}{dt} I_{x_0,t_0}(t) \leq &-\int \left(3u_x^2+\frac{1}{8}u^2\right)\psi^{\prime}+c\,e^{-\frac{x_0}{M}}\cdot e^{-\frac{1}{4M}(t_0-t)}\\
\nonumber
\leq & c \, e^{-\frac{x_0}{M}}\cdot e^{-\frac{1}{4M}(t_0-t)}.
\end{align*}
Finally, integrating on $[t,t_0]$, we obtain the desired inequality for some $\theta=\theta(M)>0$.
\end{proof}

We now show the exponential decay of the $L^2$ norm to the right of the soliton.
\begin{lemma}\label{AM2}
Let $x(t)$ satisfy the assumptions of Lemma \ref{AM} and assume that $x(t)\geq \frac{1}{2}t$ for all $t\geq 0$. Let $u\in C(\R, H^1(\R))$ be a solution of the gKdV equation \eqref{gKdV} satisfying \eqref{u-Q} with $\alpha<\alpha_0$ (with $\alpha_0$ given in Lemma \ref{AM}) and with initial data $u_0$ verifying $|u_0(x)|\leq c\, e^{-\delta|{x}|}$ for some $c>0$ and $\delta>0$. Fix $M\geq \max\{4, \frac{1}{\delta}\}$. Then there exists $C=C(M,\delta)>0$ such that for all $t\geq 0$ and $x_0>0$
\begin{equation}\label{Eq:L2-right}
\int_{x>x_0}u^2(t, x+x(t))\, dx\leq C\, e^{-\frac{x_0}{M}}.
\end{equation}
\end{lemma}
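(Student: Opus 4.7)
The plan is to apply the almost monotonicity from Lemma \ref{AM} between $t=0$ and $t=t_0$, and then use the exponential decay of $u_0$ to control the initial value $I_{x_0,t_0}(0)$; the conclusion at time $t_0$ will follow by bounding below $\psi$ on the region $x > x_0$.

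First I would fix arbitrary $t_0 > 0$ and $x_0 > 0$. At time $t_0$, after the change of variables $x \mapsto x + x(t_0)$, the functional reads
\begin{equation*}
I_{x_0,t_0}(t_0) = \int u^2(t_0, x+x(t_0))\,\psi(x - x_0)\,dx.
\end{equation*}
Since $\psi$ is increasing and $\psi(0) = \tfrac12$, we have $\psi(x-x_0) \geq \tfrac12$ for $x \geq x_0$, so
\begin{equation*}
\int_{x>x_0} u^2(t_0, x+x(t_0))\,dx \leq 2\, I_{x_0,t_0}(t_0) \leq 2\,I_{x_0,t_0}(0) + 2\theta e^{-x_0/M},
\end{equation*}
by Lemma \ref{AM}. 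So it suffices to show $I_{x_0,t_0}(0) \lesssim e^{-x_0/M}$ with a constant independent of $t_0$.

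Next I would estimate $I_{x_0,t_0}(0)$ directly. Since $x(t_0) \geq \tfrac12 t_0$ by hypothesis, the shift $-x(t_0) + \tfrac12 t_0 \leq 0$, and the monotonicity of $\psi$ yields
\begin{equation*}
\psi\Big(x - x(t_0) + \tfrac12 t_0 - x_0\Big) \leq \psi(x - x_0),
\end{equation*}
so that $I_{x_0,t_0}(0) \leq \int u_0^2(x)\, \psi(x-x_0)\,dx$. I would then split this integral into $\{x \geq x_0\}$ and $\{x < x_0\}$. On $\{x \geq x_0\}$, bound $\psi \leq 1$ and use $u_0^2(x) \leq c^2 e^{-2\delta x}$ to get a contribution $\lesssim e^{-2\delta x_0}$. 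On $\{x < x_0\}$, use the asymptotic $\psi(z) \lesssim e^{z/M}$ for $z \leq 0$ (which follows immediately from $\arctan(e^{z/M}) \leq e^{z/M}$) to estimate
\begin{equation*}
\int_{x<x_0} u_0^2(x)\,\psi(x-x_0)\,dx \lesssim e^{-x_0/M} \int_\R e^{x/M} e^{-2\delta|x|}\,dx,
\end{equation*}
and the last integral is finite precisely because $M \geq 1/\delta$ forces $1/M \leq \delta < 2\delta$, so the integrand decays on both half-lines. Finally, observing that $M \geq 1/\delta$ also gives $2\delta \geq 1/M$, the $\{x \geq x_0\}$ contribution is also bounded by $e^{-x_0/M}$, and combining gives $I_{x_0,t_0}(0) \leq C(M,\delta)\,e^{-x_0/M}$.

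The main conceptual point, and the only slightly subtle step, is the use of $x(t_0) \geq \tfrac12 t_0$ to ensure the weight at $t=0$ is dominated by a weight centered at $x_0$ \emph{independent} of $t_0$; without this, the initial bound would degrade as $t_0 \to \infty$ and the whole argument would collapse. There is no genuine difficulty beyond this: the result is essentially a clean corollary of Lemma \ref{AM} once the initial data is plugged in, provided the parameter $M$ is chosen large enough to absorb the exponential rate $\delta$ of the initial data. Since $t_0 > 0$ was arbitrary, the bound \eqref{Eq:L2-right} holds for every $t \geq 0$.
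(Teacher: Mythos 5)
Your proposal is correct and follows essentially the same route as the paper: apply Lemma \ref{AM} between $t=0$ and $t=t_0$, bound $I_{x_0,t_0}(t_0)$ from below using $\psi\geq\tfrac12$ on $\{x>x_0\}$, use $x(t_0)\geq\tfrac12 t_0$ to dominate the initial weight by $\psi(x-x_0)$, and then exploit the exponential decay of $u_0$ together with $M\geq 1/\delta$. The only cosmetic difference is that you split the initial integral into $\{x\geq x_0\}$ and $\{x<x_0\}$, whereas the paper uses the single bound $\psi(z)\leq c\,e^{z/M}$ on all of $\R$; both yield the same conclusion.
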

\begin{proof}
From Lemma \ref{AM} with $t=0$ and replacing $t_0$ by $t$, we deduce that for all $t\geq 0$
$$
I_{x_0,t}(t)-I_{x_0,t}(0)\leq \theta \, e^{-\frac{x_0}{M}}.
$$
This is equivalent to
$$
\int u^2(t, x)\psi(x-x(t)-x_0) \, dx \leq \int u_0^2(x)\psi(x-x(t)+\frac{1}{2}t-x_0) \, dx + \theta \, e^{-\frac{x_0}{M}}.
$$
On the other hand,
$$
\int u^2(t, x)\psi(x-x(t)-x_0)\,  dx = \int u^2(t, x+x(t))\psi(x-x_0) \, dx \geq \frac{1}{2}\int_{x>x_0} u^2(t, x+x(t))\, dx,
$$
where in the last inequality we have used the fact that $\psi$ is increasing and $\psi(0)=1/2$.

Now, since $-x(t)+\frac{1}{2}t\leq 0$ and $\psi$ is increasing, we get
$$
\int u_0^2(x)\psi(x-x(t)+\frac{1}{2}t-x_0) \, dx \leq \int u_0^2(x)\psi(x-x_0) \, dx.
$$
The assumptions $|u_0(x)|\leq c\, e^{-\delta|{x}|}$ and $\psi(x)\leq c \, e^{\frac{x}{M}}$ yield
$$
\int u_0^2(x)\psi(x-x_0) \, dx \leq c \, \int e^{-2\delta|{x}|} e^{\frac{x-x_0}{M}} \, dx
\leq c \, e^{-\frac{x_0}{M}}\int e^{-\left(2\delta|x|-\frac{x}{M}\right)} \, dx
\leq \bar{C}(M,\delta) \, e^{-\frac{x_0}{M}},
$$
where in the last inequality we used the fact that
$$
2\delta -\frac{1}{M}\geq \delta \Longleftrightarrow M\geq \frac{1}{\delta}.
$$
Collecting the above estimates, we obtain the inequality \eqref{Eq:L2-right}.
\end{proof}

\section{Virial-type estimates} \label{S-4}

In this section, we define a quantity depending on the $\ep$ variable that plays an important role in our instability proof. Indeed, let $\varphi \in C_0^{\infty}(\R)$ be a decreasing function with
\begin{equation*}
\varphi(y)=\begin{cases}
{1, \quad \textrm{if}  \quad  {y}\leq 1 }\\
{0, \quad \textrm{if}  \quad  {y}\geq 2}.
\end{cases}
\end{equation*}
First, for $A\geq 1$ define
$$
\varphi_A(y)=\varphi\left(\frac{y}{A}\right).
$$
Then $\varphi_A(y)=1$ for $y \leq A$ and $\varphi_A(y)=0$ for $y \geq 2A$.
Note that
$$
(\varphi_A)^\prime(y)=\frac{1}{A}\varphi^\prime\left(\frac{y}{A}\right).
$$
Next, define a function
\begin{equation}\label{def-F}
F(y)=\int_{-\infty}^{y}\Lambda Q (z)dz.
\end{equation}
From the properties of $Q$, see \eqref{prop-Q}, there exist $c, \delta>0$ such that
$$
|F(y)|\leq c \int_{-\infty}^{y}e^{-\frac{\delta}{2}|z|}dz,
$$
and thus,
$$
|F(y)|\leq c\, e^{\frac{\delta}{2}y} \quad \mbox{for} \quad y<0.
$$
Moreover, $F$ is a bounded function on all of $\R$.

Now, we define the functional
\begin{equation}\label{def-JA}
J_A(s)=\int_{\R}\ep(s)F(y)\varphi_A(y) \, dy.
\end{equation}
The nontruncated version of this virial-type functional appeared in \cite[Section 3]{MM-KdV-instability}.
The strategy in \cite{MM-KdV-instability} was to show that a virial-type functional is well-defined, bounded above, however, its derivative is bounded from below by a positive constant, which all together led to a contradiction for large times.
We will use the same approach, however, in this proof we will avoid using pointwise decay estimates as it was done in \cite{MM-KdV-instability}. Truncation helps in a straightforward way to get an upper bound on the functional. The bound on the derivative from below turns out to be impossible to get only via truncation, and we use the monotonicity. On the other hand, if we remove the truncation, then it is possible to control virial-type functional only with monotonicity, which we show in the last section. We include both proofs to illustrate different approaches.

It is clear that $J_A(s)$ is well-defined if $\ep(s)\in L^2(\R)$ due to the truncation. Furthermore, $J_A$ is upper bounded by a constant depending on $A$ and $\| \ep \|_2$. Indeed, from the definition of $\varphi_A$, we deduce that
\begin{align}\label{Bound-JA}
\nonumber
|J_A(s)|\leq &\int_{y\leq 0}\left|\ep(s)F(y)\right| dy+ \int_{0}^{2A}\left|\ep(s)F(y)\right|dy\\
\nonumber
\leq & c \, \|\ep(s)\|_2\left(\int_{y\leq 0} e^{\delta y}dy \right)^{1/2}\!\!\!\!\! + c \, A^{1/2}\|F\|_{\infty}\left(\int_{0}^{2A}\left|\ep(s)\right|^2dy\right)^{1/2}\\
\leq & c \, (1+A^{1/2})\|\ep(s)\|_2.
\end{align}
In the next lemma we compute the derivative $\frac{d}{ds}J_A(s)$.
\begin{lemma}\label{J'_A}
Suppose that $\ep(s)\in H^1(\R)$ for all $s\geq 0$ and $\|\ep(s)\|_{H^1}\leq 1$. Then the function $s\mapsto J_A(s)$ is $C^1$ and
$$
\frac{d}{ds}J_A=-\frac{\la_s}{2\la}\left(J_A-\frac14\left(\int Q\right)^2\right)+2\left(1-\frac{1}{4}\left(\frac{x_s}{\la}-1\right)\right)\int \ep Q + R(\ep, A),
$$
where there exists a universal constant $C_3>0$ such that for all $A\geq 1$ we have
\begin{align}\label{R}
|R(\ep,A)|\leq & C_3 \left( \|\ep\|^2_2+\|\ep\|^2_2\|\ep\|_{H^1}+A^{-1/2}\|\ep\|_{L^2(y\geq A)}\right.\nonumber\\
&\left.+\left|\frac{x_s}{\lambda}-1\right|(A^{-1}+\|\ep\|_2)+\left|\frac{\lambda_s}{\lambda}\right|(A^{-1}+A^{1/2}\|\ep\|_{L^2(y\geq A)})\right).
\end{align}

\end{lemma}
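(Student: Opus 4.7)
My plan is to differentiate $J_A$ under the integral, substitute the equation \eqref{ep1-order1} for $\ep_s$, and then process each of its six terms by integration by parts until the derivatives fall off $\ep$, extracting the four main quantities $J_A$, $(\int Q)^2$, $\int \ep Q$, and the cross term proportional to $(x_s/\lambda-1)\int\ep Q$. The three identities driving everything are $F' = \Lambda Q$ (by definition \eqref{def-F}), the spectral relation $L(\Lambda Q) = -2Q$, and the critical-case orthogonality $(\Lambda Q, Q) = 0$. Everything else becomes a remainder supported either near the soliton, where exponential decay of $Q$ tames it, or on the truncation annulus $A \leq |y|\leq 2A$, where powers $A^{-k}$ from $\varphi_A^{(k)}$ combine with $\|\ep\|_{L^2(y\geq A)}$ to give the advertised bounds.

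For the $(L\ep)_y$ contribution I would integrate by parts once, pair $L\ep$ against $(F\varphi_A)' = \Lambda Q\, \varphi_A + F\varphi_A'$, and use self-adjointness of $L$ together with $L(\Lambda Q\,\varphi_A) = -2Q\varphi_A - 2(\Lambda Q)'\varphi_A' - \Lambda Q \varphi_A''$ to extract the main piece $2\int\ep Q$ up to exponentially small truncation terms. For $(\lambda_s/\lambda)\Lambda Q$, the identity $\Lambda Q\cdot F = \tfrac12 (F^2)'$ and one integration by parts give $\tfrac18(\int Q)^2$ plus $O(e^{-A})$, using $F(-\infty)=0$ and $F(+\infty) = -\tfrac12 \int Q$ (since $\int \Lambda Q = -\tfrac12 \int Q$). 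The $(\lambda_s/\lambda)\Lambda \ep$ term produces $-\tfrac12 J_A$ by near-antisymmetry of $\Lambda$, plus an $\int \ep \, y\Lambda Q\,\varphi_A$ remainder bounded by $\|\ep\|_2$. Crucially, the $(x_s/\lambda-1)Q_y$ term reduces to $-(x_s/\lambda-1)\int Q\Lambda Q\,\varphi_A$, which vanishes to exponential order by $(Q,\Lambda Q)=0$; the parallel term $(x_s/\lambda-1)\ep_y$ becomes $-\tfrac12(x_s/\lambda-1)\int \ep Q$ after expanding $\Lambda Q = \tfrac12 Q + yQ_y$, plus an $\int \ep\, y Q_y$ remainder. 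Finally, for $-R(\ep)_y$, integration by parts and the Sobolev embedding $\|\ep\|_\infty \lesssim \|\ep\|_{H^1}$ bound each of the four monomials in $R(\ep)$ by $\|\ep\|_2^2(1 + \|\ep\|_{H^1})$.

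Collecting the main pieces reproduces the formula, and every leftover lands in one of the five slots of \eqref{R}: exponentially small tails in $A$ absorb into $A^{-1}$; truncation errors from $\varphi_A^{(k)}$ on $[A,2A]$ combined with Cauchy--Schwarz give $A^{-1/2}\|\ep\|_{L^2(y\geq A)}$ or, when tagged by $\lambda_s/\lambda$, $A^{1/2}\|\ep\|_{L^2(y\geq A)}$; the two bulk remainders $\int\ep\, y\Lambda Q\,\varphi_A$ and $\int\ep\, yQ_y$ are $O(\|\ep\|_2)$ and slot into the $\lambda_s/\lambda$ and $(x_s/\lambda-1)$ error terms respectively. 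The $C^1$ regularity in $s$ follows from \eqref{ControlParam} and the regularity of the modulation parameters via the chain rule.

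The main obstacle is careful bookkeeping: ensuring each of the many truncation and Taylor-type residuals falls into the prescribed slot of \eqref{R} without exceeding the allowed power of $A$. The step genuinely special to the critical case is the vanishing $(Q,\Lambda Q)=0$; without it the $(x_s/\lambda-1)Q_y$ contribution would produce a leading-order term proportional to $(x_s/\lambda-1)$ alone, with no counterpart in the displayed formula --- precisely why Combet's supercritical argument does not transfer directly to the critical setting.
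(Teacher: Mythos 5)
Your proposal is correct and follows essentially the same route as the paper's proof: the same decomposition of $\ep_s$ into linear, $Q$-driven, and nonlinear pieces, the same three key identities ($F'=\Lambda Q$, $L(\Lambda Q)=-2Q$, $(\Lambda Q,Q)=0$ together with self-adjointness of $L$), and the same sorting of truncation residuals on $[A,2A]$ into the slots of \eqref{R}. The only cosmetic difference is that you claim exponential-order smallness for the remainders coming from $\int \Lambda Q\, F(\varphi_A-1)$ and $\int Q\Lambda Q(\varphi_A-1)$, where the paper settles for the cruder $O(A^{-1})$ bounds; both are valid and both fit the stated estimate.
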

\begin{proof}
First, from Lemma \ref{eq-ep}, we have,
\begin{align*}
\frac{d}{ds}J_A= & \int \ep_sF\varphi_A \nonumber\\
=&\int \left((L\ep)_{y}+\frac{\la_s}{\la} \Lambda \ep +\left(\frac{x_s}{\la} -1 \right) \ep_{y}\right)F\varphi_A\\
& +\int  \left(\frac{\la_s}{\la} \Lambda Q +\left(\frac{x_s}{\la} -1 \right) Q_{y}\right)F\varphi_A\\
&-\int  R(\ep)_{y}F\varphi_A\\
&\equiv (I)+(II)+(III),
\end{align*}
where $R(\ep)$ is given by relation \eqref{R(ep)}.

We start with estimating the last term. Since $\|\varphi_A\|_{\infty}\leq 1$ and $\varphi_{y}\in L^{\infty}(\cR)$, integrating by parts, we obtain
\begin{align}\label{III}
(III)= & \int R(\ep) \Lambda Q \varphi_A+\int R(\ep) F \frac{1}{A}\varphi^\prime\left(\frac{y}{A}\right) \nonumber\\
\leq &\|\Lambda Q\|_{\infty}\int |R(\ep)|+\frac{\|F\|_{\infty}\|\varphi^\prime\|_{\infty}}{A}\int |R(\ep)|\nonumber\\
\leq & c_0\left(\|\Lambda Q\|_{\infty}+ \frac{\|F\|_{\infty}\|\varphi^\prime\|_{\infty}}{A}\right)\left( \|\ep\|^2_2+\|\ep\|^2_2\|\ep\|_{H^1}\right),
\end{align}
where in the last line we have used the Gagliardo-Nirenberg inequality and the fact that $\|\ep(s)\|_{\infty}\leq \|\ep(s)\|_{H^1}\leq 1$. Moreover, $c_0>0$ is independent of $A\geq 1$.

Dealing with the second term $(II)$, we get
\begin{align*}
(II)= & \frac{\la_s}{\la} \int \Lambda QF\varphi_A +\left(\frac{x_s}{\la} -1 \right) \int Q_{y}F\varphi_A\\
\equiv &\frac{\la_s}{\la} (II.1)+\left(\frac{x_s}{\la} -1 \right)(II.2),
\end{align*}
where, due to $F_{y}=\Lambda Q$, see \eqref{def-F}, yields
\begin{align*}
(II.1)= & \frac12\int (F^2)_{y}\varphi_A\\
= &\frac12\int (F^2)_{y}+\frac12\int (F^2)_{y}(\varphi_A-1)\\
= & \frac12\left(\int \Lambda Q\right)^2+\frac12\int (F^2)_{y}(\varphi_A-1)\\
\equiv & \frac12\left(\int \Lambda Q\right)^2+R_1(A).
\end{align*}
Now, integration by parts gives
\begin{align*}
\int \Lambda Q dy=&\frac12\int Q dy+\int y Q_{y} dy=-\frac12\int Q,\\
\end{align*}
and therefore,
$$
\frac12\left(\int \Lambda Q\right)^2=\frac18\left(\int Q\right)^2<+\infty.
$$
Moreover, since $\|\varphi_A-1\|_{\infty}\leq 2$ and
$$
\supp(\varphi_A-1)\subset \left\{y \in \R: y\geq A\right\},
$$
we have
\begin{align}\label{R_1}
|R_1(A)|\leq& \int_{{y}\geq A} |\Lambda Q F|\leq \|F\|_{\infty}\int_{{y}\geq A} |\Lambda Q|\frac{|{y}|}{|{y}|} \nonumber\\
\leq& \frac{\|F\|_{\infty}\|y\Lambda Q\|_1}{A}.
\end{align}
Note that $\| y \Lambda Q \|_1 \leq 3 \, \|y Q \|_2 = \it{const}$.

On the other hand, since $\Lambda Q \perp Q$ (we are in the critical case)
\begin{align*}
(II.2)= & -\int Q\Lambda Q \varphi_A-\int Q F \frac{1}{A}\varphi^\prime\left(\frac{y}{A}\right)\\
= &-\int Q\Lambda Q-\int Q\Lambda Q(\varphi_A-1)-\int Q F \frac{1}{A}\varphi^\prime\left(\frac{y}{A}\right)\\
\equiv & \, R_2(A),
\end{align*}
where, since $\|\varphi_A-1\|_{\infty}\leq 2$, $\varphi^\prime\in L^{\infty}$ and
$\textrm{supp}(\varphi_A-1)\subset \left\{y\in \R:{y}\geq A\right\}$, we obtain that
\begin{align}\label{R_2}
|R_2(A)|\leq& \int_{{y}\geq A} |Q\Lambda Q |+\|F\|_{\infty}\|\varphi^\prime\|_{\infty}\frac{1}{A}\int |Q|\nonumber \\
\leq&  \int_{{y}\geq A} |Q\Lambda Q |\frac{|{y}|}{|{y}|}+\frac{\|F\|_{\infty}\|\varphi^\prime\|_{\infty}\|Q\|_{1}}{A}\nonumber \\
\leq & \frac{1}{A}\left(\|\Lambda Q\|_{2}\|yQ\|_2+\|F\|_{\infty}\|\varphi^\prime\|_{\infty}\|Q\|_{1}\right) \equiv \frac{c}{A},
\end{align}
with the constant $c$ independent of A and $\ep$.

Next we estimate the term $(I)$. Applying integration by parts, we get
\begin{align*}
(I)= & -\int (L\ep)\Lambda Q \varphi_A-\int (L\ep)F\frac{1}{A}\varphi^\prime\left(\frac{y}{A}\right)\\
&+\frac{\la_s}{\la}\int \Lambda \ep F \varphi_A \\
&-\left(\frac{x_s}{\la} -1 \right)\left(\int \ep \Lambda Q \varphi_A+\int \ep F \frac{1}{A}\varphi^\prime\left(\frac{y}{A}\right)\right)\\
\equiv &(I.1)+\frac{\la_s}{\la} (I.2)-\left(\frac{x_s}{\la} -1 \right)(I.3).
\end{align*}
Let us first consider the term $(I.3)$. Using the definition \eqref{Eq:LambdaQ}, we have
\begin{align*}
(I.3)= &\frac12\int \ep  Q + \frac12\int \ep  Q (\varphi_A-1)+\int \ep  yQ_{y} \varphi_A+\int \ep F \frac{1}{A}\varphi^\prime\left(\frac{y}{A}\right)\\
\equiv &\frac12\int \ep  Q+R_3(\ep,A),
\end{align*}
where
\begin{align}\label{R_3}
|R_3(\ep,A)|\leq &\int_{\{y}\geq A\} \ep  Q \frac{|{y}|}{|{y}|}+\|yQ_{y}\|_2\|\ep\|_2
+\frac{\|F\|_{\infty}\|\varphi^\prime\|_{\infty}}{A}\left(\int_{A\leq y\leq 2A} |\ep|dy\right)\nonumber \\
\leq & \, c \left(1+\frac{1}{A}+\frac{1}{A^{1/2}}\right)\|\ep\|_2
\end{align}
with the constant $c>0$ independent of $\ep$ and $A$.

Next, we turn to the term $(I.2)$. Integration by parts yields
 \begin{align*}
(I.2)= &\frac12\int  \ep F \varphi_A+\int  y\ep_{y} F \varphi_A\\
=& \frac12\int  \ep F \varphi_A- \int  \ep F \varphi_A-\int y\ep \Lambda Q \varphi_A-\int y\ep F \frac{1}{A}\varphi^\prime\left(\frac{y}{A}\right)\\
\equiv& -\frac12 J_A + R_4(\ep,A),
\end{align*}
where in the last line we used the definition \eqref{def-JA}. We estimate $R_4(\ep,A)$. Indeed, it is clear that
$$
\int y\ep \Lambda Q \varphi_A \leq \|y\Lambda Q\|_2\|\ep\|_2.
$$
Moreover,
\begin{align*}
\int y\ep F \frac{1}{A}\varphi^\prime\left(\frac{y}{A}\right)\leq & \frac{1}{A}\|F\|_{\infty}\|\varphi^\prime\|_{\infty}\int_{A\leq |y|\leq 2A} |y\ep|\\
\leq & \frac{1}{A}\|F\|_{\infty}\|\varphi^\prime\|_{\infty}\|\ep\|_{L^2(y\geq A)}\left(\int_{A\leq |y|\leq 2A}|y|^2\right)^{1/2}\\
\leq & 4A^{1/2}\|F\|_{\infty}\|\varphi^\prime\|_{\infty}\|\ep\|_{L^2(y\geq A)}.
\end{align*}
Collecting the last two estimates, we deduce
\begin{equation}\label{R_4}
|R_4(\ep,A)|\leq c(\|\ep\|_2+A^{1/2}\|\ep\|_{L^2(y\geq A)}),
\end{equation}
where $c>0$ is again independent of $\ep$ and $A$.

To estimate $(I.1)$ we recall the definition of the operator $L$ to deduce
\begin{align*}
L(fg) =&-(fg)_{xx}+fg-5Q^4fg \\
= & -f_{xx}g-2f_xg_x-fg_{xx}+fg-5Q^4fg\\
=&(Lf)g-2f_{x}g_{x}-fg_{xx}.
\end{align*}
Therefore,
\begin{align*}
L(\Lambda Q \varphi_A) =&(L\Lambda Q)\varphi_A-2(\Lambda Q)_{y}\frac{1}{A}\varphi^\prime\left(\frac{y}{A}\right)-\Lambda Q\left(\frac{1}{A^2}\varphi^{\prime \prime}\left(\frac{y}{A}\right)\right)\\
\equiv& L(\Lambda Q)\varphi_A+G_A
\end{align*}
and
\begin{align*}
L\left(F\frac{1}{A}\varphi^\prime\left(\frac{y}{A}\right)\right) =&\frac{1}{A}\left[L\left(\varphi^\prime\left(\frac{y}{A}\right)\right)F-2\Lambda Q \frac{1}{A}\varphi^{\prime \prime}\left(\frac{y}{A}\right)-(\Lambda Q)_y\varphi^\prime\right]\\
\equiv & \frac{1}{A} H_A.
\end{align*}
So using that $L$ is a self-adjoint operator and $L(\Lambda Q)=-2Q$, we get
\begin{align*}
(I.1)= &-\int  \ep L(\Lambda Q \varphi_A)-\int \ep L\left(F\frac{1}{A}\varphi_{y}\left(\frac{y}{A}\right)\right)\\
=& 2\int \ep Q \varphi_A-\int \ep\left(G_A + \frac{1}{A}H_A\right)\\
=& 2\int \ep Q+ 2\int \ep Q(\varphi_A-1)-\int \ep\left(G_A + \frac{1}{A}H_A\right)\\
\equiv & 2\int \ep Q + R_5(\ep, A).
\end{align*}
We estimate the terms in $R_5(\ep, A)$ separately. First, note that
$$
\int\ep Q(\varphi_A-1)\leq 2\int_{\{y}\geq A\} |\ep Q|\frac{|{y}|}{|{y}|}\leq \frac{2}{A}\|yQ\|_2\|\ep\|_2.
$$
Next,
\begin{align*}
\int \ep G_A \leq & \frac{2}{A}\|\varphi^\prime\|_{\infty}\|(\Lambda Q)_{y}\|_2\|\ep\|_2 + \frac{1}{A^2}\|\varphi^{\prime \prime}\|_{\infty}\|\Lambda Q\|_2\|\ep\|_2.
\end{align*}
Now observe that $\|H_A\|_{\infty}\leq c$ (independent of $A\geq 1$) and $\supp(H_A)\subset \left\{A\leq y\leq 2A\right\}$, thus,
$$
\frac{1}{A}\int \ep H_A= \frac{1}{A}\int_{A\leq y\leq 2A} \ep H_A\leq \frac{1}{A}\|H_A\|_{\infty}\left(\int_{A\leq y\leq 2A}|\ep| dy\right)\leq \frac{2\|H_A\|_{\infty}}{A^{1/2}}\|\ep\|_2.
$$
Hence, for $A\geq 1$
\begin{equation}\label{R_5}
|R_5(\ep, A)|\leq \frac{c}{A^{1/2}}\|\ep\|_{L^2(y\geq A)},
\end{equation}
where once again $c>0$ is independent of $\ep$ and $A$.

Collecting all above estimates, we obtain
\begin{align*}
\frac{d}{ds}J_A= & 2\int \ep Q + R_5(\ep, A)+\frac{\la_s}{\la} \left(-\frac12J_A + R_4(\ep,A)\right)-\left(\frac{x_s}{\la} -1 \right)\left(\frac12\int \ep  Q+R_3(\ep,A)\right)\\
& +\frac{\la_s}{\la} \left(\frac18\left(\int Q\right)^2 +R_1(A)\right)+\left(\frac{x_s}{\la} -1 \right)R_2(A)\\
&+(III)\\
=& -\frac{\la_s}{2\la}\left(J_A-\kappa
\right)+2\left(1-\frac{1}{4}\left(\frac{x_s}{\la}-1\right)\right)\int \ep Q + R(\ep, A),
\end{align*}
where
\begin{equation}\label{kappa}
\kappa=\frac14\left(\int Q\right)^2
\end{equation}
and
$$
R(\ep, A)= (III)+R_5(\ep,A)+\left(\frac{x_s}{\la}-1\right)\left(R_2(A)-R_3(\ep, A)\right)+\frac{\la_s}{\la}\left(R_1(A)+R_4(\ep, A)\right).
$$
Finally, there exists a universal constant $C_3>0$ (independent of $\ep$ and $A$) such that, in view of \eqref{III}, \eqref{R_1}, \eqref{R_2}, \eqref{R_3}, \eqref{R_4} and \eqref{R_5}, for all $A\geq 1$ the inequality \eqref{R} holds.
\end{proof}

\begin{lemma}[Comparison between $M_0$, $\ep_0$ and $\int \ep_0Q$]\label{lemma-M_0}
There exists a universal constant $C_4>0$ such that for $\|\ep_0\|_{H^1}\leq 1$, we have
$$
\left|M_0-2\int\ep_0Q\right|+\left|E_0+\int\ep_0Q\right|+\left|E_0+\frac12 M_0\right|\leq C_4 \|\ep_0\|^2_{H^1}.
$$
\end{lemma}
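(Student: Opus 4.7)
The proof splits into three estimates, one for each absolute value. The first is elementary: from the definition \eqref{M_0} of $M_0$, we have the exact identity $M_0 - 2\int \ep_0 Q = \int \ep_0^2 = \|\ep_0\|_{L^2}^2$, so the first summand is trivially bounded by $\|\ep_0\|_{H^1}^2$.

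For the second summand, the plan is to Taylor-expand $E[Q+\ep_0]$ around $Q$ and exploit the two defining facts that $E[Q]=0$ (a Pohozaev-type identity in the $L^2$-critical case) and that $-Q_{xx} - Q^5 = -Q$ (the ground state equation). Concretely, writing out
\begin{equation*}
E[Q+\ep_0] = \tfrac{1}{2}\int(Q_x+(\ep_0)_x)^2 - \tfrac{1}{6}\int (Q+\ep_0)^6,
\end{equation*}
expanding both terms, integrating the cross term $\int Q_x(\ep_0)_x$ by parts, and using the ground state equation to identify the linear contribution as $-\int Q\ep_0$, one obtains
\begin{equation*}
E_0 + \int Q\ep_0 = \tfrac{1}{2}\int(\ep_0)_x^2 - \tfrac{5}{2}\int Q^4 \ep_0^2 - \tfrac{1}{6}\Big(20\int Q^3\ep_0^3 + 15\int Q^2\ep_0^4 + 6\int Q \ep_0^5 + \int \ep_0^6\Big).
\end{equation*}
The quadratic piece is clearly $O(\|\ep_0\|_{H^1}^2)$ since $Q\in L^\infty$, and each higher-order term is bounded by $c\,\|\ep_0\|_{L^2}^2 \|\ep_0\|_{H^1}^{k}$ for $k\geq 1$ via Gagliardo--Nirenberg together with the exponential decay of $Q$; under the hypothesis $\|\ep_0\|_{H^1}\leq 1$ each one is absorbed into $C\|\ep_0\|_{H^1}^2$. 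This is exactly the estimate already recorded after the energy linearization identity \eqref{E-lin}, so I would simply invoke it.

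The third summand reduces algebraically to the previous two: writing
\begin{equation*}
E_0 + \tfrac{1}{2}M_0 = \Big(E_0 + \int Q\ep_0\Big) + \tfrac{1}{2}\Big(M_0 - 2\int Q\ep_0\Big),
\end{equation*}
and applying the bounds obtained in the first two steps gives $|E_0+\tfrac{1}{2}M_0|\leq C\|\ep_0\|_{H^1}^2$.

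There is no real obstacle in this lemma; it is a bookkeeping statement. The only non-obvious ingredient is the fact that $E[Q]=0$, which is specific to the $L^2$-critical power and which is what makes the linear-in-$\ep_0$ part of $E[Q+\ep_0]$ collapse to exactly $-\int Q\ep_0$ (with no leftover contribution from $E[Q]$). Taking $C_4$ as the sum of the three resulting constants yields the claimed universal bound.
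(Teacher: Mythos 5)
Your proof is correct and follows essentially the same route as the paper: the exact identity $M_0-2\int\ep_0 Q=\|\ep_0\|_{L^2}^2$, the energy linearization \eqref{E-lin} for the quadratic-and-higher remainder, and a triangle inequality to get the third term from the other two. The only (immaterial) difference is the order: the paper bounds $\left|E_0+\tfrac12 M_0\right|$ directly from \eqref{E-lin} and deduces $\left|E_0+\int\ep_0 Q\right|$, while you do the reverse.
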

\begin{proof}
First, observe that from the definition \eqref{M_0}, we have
$$
M_0-2\int\ep_0Q=\int\ep_0^2,
$$
and so $\left|M_0-2\int\ep_0Q\right|=\|\ep_0\|^2_{2}$.
Next, from \eqref{E-lin} we deduce
\begin{align*}
E_0=E[Q+\ep_0]=&\frac12 \left( L \ep_0, \ep_0 \right) - \frac{1}{2}M_0\\
& - \frac16 \left( 20 \int Q^3\ep_0^3 +  15\int Q^2\ep_0^4+6\int Q\ep_0^5+\int\ep_0^6\right),
\end{align*}
which implies, for some universal constant $c>0$, that
$$
\left|E_0+\frac12 M_0\right|\leq c\|\ep_0\|^2_{H^1},
$$
by the definition of $L$, and the fact that $\|\ep_0\|_{\infty}\leq \|\ep_0\|_{H^1}\leq 1$.

Finally,
$$
\left|E_0+\int\ep_0Q\right|\leq \left|E_0+\frac12 M_0\right|+\frac{1}{2}\left|M_0-2\int\ep_0Q\right|\leq \left(c+\frac{1}{2}\right) \|\ep_0\|^2_{H^1},
$$
and setting $C_4=c+\frac{1}{2}$ concludes the proof.
\end{proof}

\begin{lemma}[Control of $\|\ep(s)\|_{H^1}$]\label{H^1-control}
There exists $0<\alpha_2<1$ such that if $\|\ep(s)\|_{H^1}<\alpha$, $|\lambda(s)-1|<\alpha$, where $\alpha<\alpha_2$, and $\ep(s)\perp \{Q_{y}, Q^3\}$ for all $s\geq 0$, then there exists a universal constant $C_5>0$ such that
$$
\left(L\ep(s), \ep(s)\right)\leq \|\ep(s)\|^2_{H^1}\leq C_5\left(\alpha \left|\int \ep_0 Q\right|+\|\ep_0\|^2_{H^1}\right).
$$
\end{lemma}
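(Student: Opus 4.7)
The inequality $(L\ep(s),\ep(s)) \leq \|\ep(s)\|_{H^1}^2$ is immediate from the definition \eqref{Eq:L}, since the term $-5\int Q^4\ep^2$ is nonpositive, so all the work goes into the upper bound on $\|\ep(s)\|_{H^1}^2$.

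My plan is to chain together the energy linearization \eqref{E-lin}, the conservation laws \eqref{ep-M+E}, the perturbative identities of Lemma \ref{lemma-M_0}, and the coercivity of Lemma \ref{Lemma-ort3}. Applying \eqref{E-lin} at time $s$, together with the hypothesis $\|\ep(s)\|_{H^1}<\alpha$ to control the cubic-and-higher terms, one gets
$$
\tfrac12(L\ep(s),\ep(s)) = E[Q+\ep(s)] + \int Q\ep(s) + \tfrac12\int\ep^2(s) + O\big(\alpha\|\ep(s)\|_{H^1}^2\big).
$$
Using $E[Q+\ep(s)]=\la^2(s)E[u_0]$ and $2\int Q\ep(s)+\int\ep^2(s)=M_0$ from \eqref{ep-M+E}, and writing $\la^2(s)E[u_0]+\tfrac12 M_0 = (\la^2(s)-1)E[u_0] + (E[u_0]+\tfrac12 M_0)$, this collapses to
$$
\tfrac12(L\ep(s),\ep(s)) = (\la^2(s)-1)E[u_0] + \big(E[u_0]+\tfrac12 M_0\big) + O\big(\alpha\|\ep(s)\|_{H^1}^2\big).
$$
By Lemma \ref{lemma-M_0} the parenthesized middle piece is bounded by $C_4\|\ep_0\|_{H^1}^2$, while $|E[u_0]|\leq |\int\ep_0 Q|+C_4\|\ep_0\|_{H^1}^2$. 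Combined with $|\la^2(s)-1|\leq 3\alpha$ (valid once $\alpha<1$), this produces
$$
\tfrac12(L\ep(s),\ep(s)) \leq 3\alpha\left|\int\ep_0 Q\right| + (1+3\alpha)C_4\|\ep_0\|_{H^1}^2 + O\big(\alpha\|\ep(s)\|_{H^1}^2\big).
$$

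To convert the left side into $\|\ep(s)\|_{H^1}^2$ I would invoke Lemma \ref{Lemma-ort3} — applicable thanks to the orthogonality $\ep(s)\perp\{Q_y,Q^3\}$ — together with the identity $\|\ep\|_{H^1}^2=(L\ep,\ep)+5\int Q^4\ep^2\leq (L\ep,\ep)+5\|Q\|_\infty^4\|\ep\|_2^2$, and bootstrap via $\|\ep\|_2^2\leq (L\ep,\ep)$ to obtain a universal $K$ with $\|\ep(s)\|_{H^1}^2\leq K(L\ep(s),\ep(s))$. Substituting and then choosing $\alpha_2$ small enough that the $O(\alpha\|\ep(s)\|_{H^1}^2)$ term can be absorbed into the left-hand side yields the stated inequality with a universal $C_5$.

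The only delicate step is the bookkeeping: one must arrange that the factor $\alpha$ coming from $|\la^2(s)-1|$ lands on $|\int\ep_0 Q|$ (which, under the hypothesis $\|\ep_0\|_{H^1}^2\leq b_0\int\ep_0 Q$ of Theorem \ref{Theo-Inst}, can be much larger than $\|\ep_0\|_{H^1}^2$), while the unweighted $\|\ep_0\|_{H^1}^2$ contribution stays on the remaining term. This asymmetry is exactly what makes the estimate useful for the instability argument: the $H^1$ norm of $\ep(s)$ cannot grow faster than the linear ``drift'' in the $\int\ep_0 Q$ direction. No genuine obstacle arises; the argument is essentially algebraic once the conservation laws and the coercivity lemma are in hand.
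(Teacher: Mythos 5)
Your proposal is correct and follows essentially the same route as the paper's proof: both chain the energy linearization \eqref{E-lin} with the conservation laws \eqref{ep-M+E}, the splitting $\la^2(s)E_0+\tfrac12 M_0=(\la^2(s)-1)E_0+\bigl(E_0+\tfrac12 M_0\bigr)$ controlled via Lemma \ref{lemma-M_0}, and the coercivity of Lemma \ref{Lemma-ort3} both to absorb the cubic-and-higher remainder and to pass from $(L\ep,\ep)$ to $\|\ep\|_{H^1}^2$. The only difference is cosmetic: the paper absorbs the remainder into $(L\ep,\ep)$ before converting to the $H^1$ norm, whereas you carry it as $O(\alpha\|\ep(s)\|_{H^1}^2)$ and absorb it at the end.
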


\begin{proof}
This is Lemma 11 in Martel-Merle \cite{MM-KdV-instability}, however, we provide a proof here, since our statement differs slightly from the one in \cite{MM-KdV-instability}. From \eqref{E-lin} we have
\begin{align*}
\left( L \ep(s), \ep(s) \right)=& 2E[Q+\ep(s)] + M_0\\
& - \frac16 \left( 20 \int Q^3\ep^3 +  15\int Q^2\ep^4+6\int Q\ep^5+\int\ep^6\right).
\end{align*}
Since $\|\ep(s)\|_{\infty}\leq \|\ep(s)\|_{H^1}\leq 1$, there exists a universal constant $c>0$ such that
\begin{align}\label{Coerc}
\left( L \ep(s), \ep(s) \right)\leq & 2E[Q+\ep(s)] + M_0 + c\, \|\ep(s)\|_{H^1}\|\ep(s)\|^2_{2} \nonumber\\
\leq &2E[Q+\ep(s)] + M_0 + c\, \|\ep(s)\|_{H^1}\left( L \ep(s), \ep(s) \right),
\end{align}
where in the last line we have used the coercivity of the quadratic form $(L\cdot, \cdot)$, provided $\ep(s)\perp \{Q_{y}, Q^3\}$, see  Lemma \ref{Lemma-ort3}.

Now, there exists $\alpha_2>0$ such that if $\|\ep(s)\|_{H^1}<\alpha$ for all $s\geq 0$, where $\alpha<\alpha_2$, then
$$
c \, \|\ep(s)\|_{H^1}\leq \frac{1}{2}.
$$
Therefore, the last term on the right hand side of \eqref{Coerc} may be absorbed into the left side, and we obtain
\begin{align*}
\left( L \ep(s), \ep(s) \right)\leq & 4E[Q+\ep(s)] + 2M_0  \nonumber\\
\leq &4\lambda^2(s)E_0 + 2M_0,
\end{align*}
where in the last line we have used the second relation in \eqref{ep-M+E}.

Next, we use the last estimate to control the $H^1$-norm of $\ep(s)$. Indeed, from the definition of $L$, we have
\begin{align*}
\|\ep(s)\|^2_{H^1}=&\int \ep^2(s)+\int|\ep_y(s)|^2=\left( L \ep(s), \ep(s) \right)+ 5\int Q^4\ep^2(s)\\
\leq & \left( L \ep(s), \ep(s) \right)+ \|5 Q^4 \|_{\infty}\|\ep(s)\|^2_2\\
\leq &\left(1+{\|5Q^4\|_{\infty}}\right)\left( L \ep(s), \ep(s) \right)\\
\leq &\left(1+\|5Q^4\|_{\infty}\right)(4\lambda^2(s)E_0 + 2M_0)\\
\leq &4\left(1+\|5Q^4\|_{\infty}\right)\left((\lambda(s)-1)(\lambda(s)+1)|E_0| + \left|E_0+\frac12M_0\right|\right).
\end{align*}

Finally, since $|\lambda(s)-1|<\alpha$, for $\alpha <1$ we have $|\lambda(s)+1|\leq 3$, and applying Lemma \ref{lemma-M_0}, we deduce
\begin{align*}
(\lambda(s)-1)(\lambda(s)+1)|E_0|+\left|E_0+\frac12M_0\right|\leq &3 \alpha \left(\left|E_0+\int \ep_0 Q\right|+\left|\int \ep_0 Q\right|\right)+C_4 \|\ep_0\|^2_{H^1}\\
\leq & 3 \alpha \left(C_4 \|\ep_0\|^2_{H^1}+\left|\int \ep_0 Q\right|\right)+C_4 \|\ep_0\|^2_{H^1},
\end{align*}
which implies, since $\alpha<1$, the existence of an universal constant $C_5>0$ such that
$$
\left( L \ep(s), \ep(s) \right)\leq \|\ep(s)\|^2_{H^1} \leq C_5\left(\alpha \left|\int \ep_0 Q\right|+\|\ep_0\|^2_{H^1}\right),
$$
which concludes the proof.
\end{proof}

\section{The proof of $H^1$-instability of $Q$} \label{S-5}

In this section we prove Theorem \ref{Theo-Inst}. Let $0<b_0<1$ to be chosen later and set the initial data
$$
u_0=Q+\ep_0
$$
with $\|\ep_0\|_{H^1}\leq 1$ satisfying
\begin{equation}\label{ep_0}
\|\ep_0\|^2_{H^1}\leq b_0 \int\ep_0Q\leq b_0\|Q\|^2.
\end{equation}

Moreover, we can also assume that
\begin{equation}\label{ep_02}
\ep_0=u_0-Q \perp \{Q_{y}, Q^3\}
\end{equation}
and, for all $y\in \R$
\begin{equation}\label{ep_03}
|\ep_0(y)|\leq \tilde{c}\, e^{-\tilde{\delta}|y|} \textrm{ for some } \tilde{c}>0 \textrm{ and } \tilde{\delta}>0.
\end{equation}

\begin{remark}
One simple example of such initial data is (for all $n\in \mathbb{N}$)
$$
\ep^n_0=\frac{1}{n}(Q+rQ^3),
$$
where $r\in \R$ is chosen such that $\ep^n_0 \perp Q^3$, that is, $r=\displaystyle -\frac{\int Q^4}{\int Q^6}$.
\end{remark}

Assume, by contradiction, that $Q$ is stable. Therefore, for  $\alpha_0<\overline{\alpha}$, where $\overline{\alpha}>0$ is given by Proposition \ref{ModThI}, if $b_0$ is sufficiently small, we have $u(t)\in U_{\alpha_0}$ (recall \eqref{tube}). Thus, from Definition \ref{eps}, there exist functions $\lambda(t)$ and $x(t)$ such that
\begin{equation*}
\ep(t) =\lambda^{1/2}(t) \, u( \lambda(t)y +x(t)) - Q(y) \perp \{Q_{y},  Q^3\}
\end{equation*}
and also $\lambda(0)=1$ and $x(0)=0$ (by \eqref{ep_02}).

Next rescaling time $t \mapsto s$ by $\frac{ds}{dt} = \frac1{\la^3}$ and taking $\alpha_0<\alpha_1$, where $\alpha_1>0$ is given by Lemma \ref{Lemma-param}, we obtain that $\lambda(s)$ and $x(s)$ are $C^1$ functions and $\ep(s)$ satisfies the equation \eqref{ep1-order1}. Moreover, from Proposition \ref{ModThI}, since $u(t)\in U_{\alpha_0}$, we have
\begin{equation}\label{Bound_ep}
\|\ep(s)\|_{H^1}\leq C_1 \alpha_0 \quad \textrm{and} \quad |\lambda(s)-1|\leq C_1\alpha_0.
\end{equation}
Furthermore, in view of \eqref{ControlParam}, if $\alpha_0>0$ is small enough, we deduce that
$$
\left|\frac{\lambda_s}{\lambda}\right|+\left|\frac{x_s}{\lambda}-1\right|\leq C_2\|\ep(s)\|_2\leq C_1C_2\alpha_0.
$$
Since $ x_t=x_s/\lambda^3$, we conclude that
$$
\frac{1-C_1C_2\alpha_0}{(1+C_1\alpha_0)^2}\leq \frac{1-C_1C_2\alpha_0}{\lambda^2}\leq x_t\leq \frac{1+C_1C_2\alpha_0}{\lambda^2}\leq \frac{1+C_1C_2\alpha_0}{(1-C_1\alpha_0)^2}.
$$
Therefore, we can choose $\alpha_0>0$ small enough such that
$$
\frac{3}{4}\leq x_t \leq \frac{5}{4}.
$$
The last inequality implies that $x(t)$ is increasing and by the Mean Value Theorem
$$
x(t_0)-x(t)\geq \frac{3}{4}(t_0-t)
$$
for every $t_0, t\geq 0$ with $t\in [0,t_0]$.
Also, recalling $x(0)=0$, another application of the Mean Value Theorem yields
$$
x(t)\geq \frac{1}{2}t
$$
for all $t\geq 0$. Finally, by assumption \eqref{ep_03} and properties of $Q$, we have
$$
|u_0(x)|\leq ce^{-{\delta}|x|}
$$
for some $c>0$ and ${\delta}>0$. Hence, from Lemma \ref{AM2}, for a fixed $M\geq \max\{4, \frac{1}{\delta}\}$, there exists $C=C(M,\delta)>0$ such that for all $t\geq 0$ and $x_0>0$, we have
$$
\int_{x>x_0}u^2(t, x+x(t))dx\leq Ce^{-\frac{x_0}{M}}.
$$

The next result provides $L^2$ exponential decay on the right also for $\ep(s)$.
\begin{corollary}\label{AM3}
Let $M\geq \max\{4, \frac{1}{\delta}\}$. If $\alpha_0>0$ is sufficiently small, then there exists $C=C(M,\delta)>0$ such that for every $s\geq 0$ and $y_0>0$
$$
\int_{y>y_0}\ep^2(s, y)dy\leq Ce^{-\frac{y_0}{2M}}.
$$
\end{corollary}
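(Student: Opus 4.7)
The plan is to transfer the exponential $L^2$-decay on the right of the soliton from the original solution $u(t,x)$ (which is exactly what Lemma \ref{AM2} provides) to the modulated error $\ep(s,y)$ by simply unwinding the change of variables in Definition \ref{eps}. The triangle inequality splits $\ep$ into a $u$-piece and a $Q$-piece; each is handled separately and both produce an exponential bound of the required form.

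First, at a fixed $s$ (equivalently at the corresponding $t$), the definition $\ep(s,y)=\lambda^{1/2}(t)u(t,\lambda(t)y+x(t))-Q(y)$ together with $(a-b)^2\le 2a^2+2b^2$ gives, for any $y_0>0$,
\begin{equation*}
\int_{y>y_0}\ep^2(s,y)\,dy\le 2\lambda(t)\int_{y>y_0}u^2(t,\lambda(t)y+x(t))\,dy+2\int_{y>y_0}Q^2(y)\,dy.
\end{equation*}
In the first term I substitute $x'=\lambda(t)y$ and absorb the Jacobian to obtain
\begin{equation*}
2\lambda(t)\int_{y>y_0}u^2(t,\lambda(t)y+x(t))\,dy=2\int_{x'>\lambda(t)y_0}u^2(t,x'+x(t))\,dx',
\end{equation*}
which, by Lemma \ref{AM2} applied with the parameter $x_0$ replaced by $\lambda(t)y_0>0$, is bounded by $2Ce^{-\lambda(t)y_0/M}$. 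Here I use that the hypotheses of Lemma \ref{AM2} were already verified in the preceding paragraphs of Section \ref{S-5}: $x(t)$ is increasing with $x(t_0)-x(t)\ge\tfrac34(t_0-t)$ and $x(t)\ge\tfrac12 t$, the closeness hypothesis \eqref{u-Q} follows from $u(t)\in U_{\alpha_0}$, and the exponential bound on $u_0$ follows from $u_0=Q+\ep_0$ together with \eqref{prop-Q} and \eqref{ep_03}.

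Next, I use the bound $|\lambda(t)-1|\le C_1\alpha_0$ from \eqref{Bound_ep}: taking $\alpha_0$ small enough so that $C_1\alpha_0\le 1/2$ gives $\lambda(t)\ge 1/2$, hence $e^{-\lambda(t)y_0/M}\le e^{-y_0/(2M)}$. For the $Q$-piece I invoke the exponential decay \eqref{prop-Q}, which yields $Q^2(y)\le c^2 e^{-2|y|}$, so
\begin{equation*}
\int_{y>y_0}Q^2(y)\,dy\le \tfrac{c^2}{2}e^{-2y_0}\le \tfrac{c^2}{2}e^{-y_0/(2M)},
\end{equation*}
the last inequality being valid because $M\ge 4$ forces $2\ge 1/(2M)$. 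Combining the two pieces yields the claimed bound with a constant $C=C(M,\delta)$ inherited from Lemma \ref{AM2}.

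There is essentially no obstacle here: the only subtleties are the two book-keeping points above, namely (i) passing from an $x$-integral on $\{x-x(t)>\lambda(t)y_0\}$ back to a $y$-integral on $\{y>y_0\}$ via the modulation change of variables, which costs a harmless factor $\lambda(t)\ge 1/2$ in the exponent, and (ii) checking that $Q$'s own decay rate $e^{-2y_0}$ dominates the looser target rate $e^{-y_0/(2M)}$, which is immediate since $M\ge 4$. All hypotheses needed to invoke Lemma \ref{AM2} have already been set up in the proof of Theorem \ref{Theo-Inst} just above, so the corollary follows at once.
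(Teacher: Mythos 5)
Your proof is correct and follows essentially the same route as the paper's: split $\ep$ into the $u$-part and the $Q$-part, apply Lemma \ref{AM2} to the former after the modulation change of variables (picking up the factor $\lambda(s)\ge 1/2$ in the exponent, whence the $e^{-y_0/(2M)}$ rate), and use the exponential decay of $Q$ for the latter. The only difference is cosmetic bookkeeping — you perform the triangle inequality in the modulated variable and rescale only the $u$-term, whereas the paper rescales at the end — so there is nothing to add.
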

\begin{proof}
From the definition of $\ep(s)$, we have
$$
\frac{1}{\lambda^{1/2}(s)}\ep\left(s,\frac{y}{\lambda(s)}\right)=u(s,y+x(s))-\frac{1}{\lambda^{1/2}(s)}Q\left(\frac{y}{\lambda(s)}\right).
$$
Recalling the property of $Q$ (and that $M \geq 4$), we get
\begin{equation}\label{Q-decay}
\frac{1}{\lambda^{1/2}(s)}Q\left(\frac{y}{\lambda(s)}\right)\leq \frac{c}{\lambda^{1/2}(s)}e^{-\left|\frac{y}{\lambda(s)}\right|}\leq \sqrt{2}ce^{-\frac{2}{3}|x|}\leq ce^{-\frac{|x|}{M}},
\end{equation}
and if $\alpha_0<(2C_1)^{-1}$, then $1/2\leq \lambda(s)\leq 3/2$.
Therefore, using Lemma \ref{AM2} and \eqref{Q-decay}, we deduce that
\begin{align*}
\int_{y>y_0}\frac{1}{\lambda(s)}\ep^2\left(s,\frac{y}{\lambda(s)}\right)dy\leq &2\int_{y>y_0}u^2(s,y+x(s))dy+2\int_{y>y_0}\frac{1}{\lambda(s)}Q^2\left(\frac{y}{\lambda(s)}\right)\\
\leq & 2c\, e^{-\frac{y_0}{M}}+2c\int_{y>y_0}e^{-\frac{2y}{M}}dy\\
\leq & c\, e^{-\frac{y_0}{M}}.
\end{align*}
Finally, by the scaling invariance of the $L^2$-norm
$$
\int_{y>y_0}\ep^2(s, y)dy=\int_{y>\lambda(s)y_0}\frac{1}{\lambda(s)}\ep^2\left(s,\frac{y}{\lambda(s)}\right)dy\leq c \, e^{-\frac{\lambda(s)y_0}{M}}\leq C \, e^{-\frac{y_0}{2M}},
$$
since $\lambda(s)\geq 1/2$.
\end{proof}

Next, as in Martel-Merle \cite{MM-KdV-instability} (see also Bona-Souganidis-Strauss \cite{BSS} and Grillakis-Shatah-Strauss \cite{GSS}), we define a rescaled version of $J_A$, which plays a central role in the proof \footnote{Note that we do not make use of the functional $I(s)=\frac12\int y\ep^2(s)$ introduced by Martel-Merle \cite{MM-KdV-instability} in their instability proof.}. Recall the definition of $J_A$ in \eqref{def-JA} and let
$$
K_A(s)=\lambda^{1/2}(s)(J_A(s)-\kappa),
$$
where $\kappa$ is given by \eqref{kappa}.

Therefore, from \eqref{Bound-JA} and \eqref{Bound_ep}, it is clear that
\begin{equation}\label{Bound-KA}
|K_A(s)|\leq \left(\frac32\right)^{1/2} \left(c\,(1+A^{1/2})\|F\|_{\infty}\|\ep(s)\|_2+\kappa\right)<+\infty,
\end{equation}
for all $s\geq 0$.

Moreover, using Lemma \ref{J'_A}, we also have
\begin{align}\label{K'_A}
\frac{d}{ds}K_A =& \frac12\frac{\lambda_s}{\la^{1/2}}\left(J_A-\kappa\right)+ \lambda^{1/2} \frac{d}{ds}J_A \nonumber\\
=& \lambda^{1/2} \left(\frac{d}{ds}J_A+\frac{\lambda_s}{2\lambda}\left(J_A-\kappa\right)\right) \nonumber \\
=&\lambda^{1/2} \left(2\left(1-\frac14\left(\frac{x_s}{\lambda}-1\right)\right)\int\ep Q +R(\ep,A)\right).
\end{align}

In the next result we obtain a strictly positive lower bound for $\ds \frac{d}{ds}K_A(s)$ for a certain choice of $\alpha_0, b_0$ and $A$.
\begin{theorem}
There exist $b_0, \alpha_0>0$ sufficiently small and $A\geq 1$ sufficiently large such that
\begin{equation}\label{Bound-K'_A}
\frac{d}{ds}K_A(s)\geq \frac12 \int \ep_0 Q>0 \quad \textrm{for all} \quad s\geq 0.
\end{equation}
\end{theorem}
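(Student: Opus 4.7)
The plan is to expand $\frac{d}{ds}K_A$ via \eqref{K'_A} and show that, for an appropriate choice of the parameters, the main term $2\bigl(1-\tfrac14(x_s/\lambda-1)\bigr)\int\ep Q$ stays close to $2\int\ep_0 Q$ while $R(\ep,A)$ is negligible in comparison. Write $I_0:=\int\ep_0 Q>0$; the target inequality then reads $\frac{d}{ds}K_A\geq \tfrac12 I_0$, and every quantity will be estimated as a multiple of $I_0$.

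First I would control $\int\ep(s)Q$ from below. The conservation of $\int v^2$ (which follows from the scaling invariance of the parametrization and mass conservation for $u$) yields $2\int\ep(s)Q+\|\ep(s)\|_2^2=M_0$ for every $s\geq 0$. Lemma \ref{lemma-M_0} combined with the assumption $\|\ep_0\|_{H^1}^2\leq b_0 I_0$ gives $|M_0-2I_0|\leq C_4 b_0 I_0$, while Lemma \ref{H^1-control} gives $\|\ep(s)\|_2^2\leq\|\ep(s)\|_{H^1}^2\leq C_5(\alpha_0+b_0) I_0$. Consequently,
$$
\int\ep(s)Q=\tfrac12\bigl(M_0-\|\ep(s)\|_2^2\bigr)\geq\bigl(1-C(\alpha_0+b_0)\bigr)I_0
$$
for a universal $C$. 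In parallel, Lemma \ref{Lemma-param} yields $|x_s/\lambda-1|+|\lambda_s/\lambda|\leq C_2\sqrt{C_5(\alpha_0+b_0)I_0}$, and \eqref{Bound_ep} gives $\lambda^{1/2}\in[(1-C_1\alpha_0)^{1/2},(1+C_1\alpha_0)^{1/2}]$. Hence the main term of \eqref{K'_A} is bounded below by $(2-O(\alpha_0+b_0))I_0$.

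The heart of the proof is bounding $\lambda^{1/2}|R(\ep,A)|$ via \eqref{R}, term by term. The first two terms are handled at once: $\|\ep\|_2^2+\|\ep\|_2^2\|\ep\|_{H^1}\leq 2\|\ep\|_2^2\leq C(\alpha_0+b_0)I_0$, using $\|\ep\|_{H^1}\leq 1$. The tail pieces of \eqref{R} carry $\|\ep\|_{L^2(y\geq A)}$, and these are controlled exclusively by Corollary \ref{AM3}, which provides $\|\ep\|_{L^2(y\geq A)}\leq Ce^{-A/(4M)}$; therefore both $A^{-1/2}\|\ep\|_{L^2(y\geq A)}$ and $A^{1/2}\|\ep\|_{L^2(y\geq A)}$ decay exponentially in $A$. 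The remaining cross terms $|x_s/\lambda-1|(A^{-1}+\|\ep\|_2)$ and $|\lambda_s/\lambda|\cdot A^{-1}$ combine the smallness $\|\ep\|_2\leq\sqrt{C_5(\alpha_0+b_0)I_0}$ with a factor $A^{-1}$. After dividing each bound by $I_0$ (which is itself bounded by $\|Q\|_2$ via Cauchy--Schwarz and $\|\ep_0\|_{H^1}\leq 1$), the resulting small factors fall into three types: powers of $\alpha_0+b_0$, controlled by taking $\alpha_0+b_0$ small; $A^{-1}/\sqrt{I_0}$ from cross terms, controlled by $A\gg 1/\sqrt{I_0}$; and $A^{\pm 1/2}e^{-A/(4M)}/I_0$ from the tail terms, controlled by $A\gg\log(1/I_0)$. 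All of these can be accommodated by taking $A$ large depending on $I_0$.

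Putting everything together, for $\alpha_0+b_0$ small enough (depending only on the universal constants $C_1,\ldots,C_5$) and then $A$ large enough (depending on $I_0$), one can arrange $\lambda^{1/2}|R(\ep,A)|\leq\tfrac14 I_0$ and the main term to exceed $\tfrac34 I_0$, yielding $\frac{d}{ds}K_A\geq\tfrac12 I_0=\tfrac12\int\ep_0 Q$ for all $s\geq 0$. The main obstacle will be the interplay between the smallness parameters and $A$: while $\alpha_0$ and $b_0$ can be fixed once the universal constants are known, $A$ must grow with $1/I_0$ because $I_0$ can be arbitrarily small in the admissible class (as the example $\ep_0^n=\tfrac1n(Q+rQ^3)$ illustrates); one must track dependencies carefully to avoid circularity. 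The exponential tail bound from Corollary \ref{AM3}, which is the payoff of the monotonicity analysis in Section \ref{S-3}, is indispensable for the term $|\lambda_s/\lambda|\cdot A^{1/2}\|\ep\|_{L^2(y\geq A)}$ in \eqref{R}; without it, the positive power of $A$ could not be beaten.
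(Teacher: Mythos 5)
Your proposal is correct and follows essentially the same strategy as the paper: lower-bound the main term by relating $\int \ep(s)Q$ to $\int\ep_0 Q$ via mass conservation, convert every remainder term into a multiple of $\int\ep_0 Q$ using Lemma \ref{H^1-control} together with \eqref{ep_0}, use the monotonicity-based tail bound of Corollary \ref{AM3} to beat the $A^{1/2}$ factor, and choose $\alpha_0,b_0$ small first and then $A$ large depending on the data. The only cosmetic difference is that the paper absorbs the $-\int\ep^2$ contribution into the remainder and works with $M_0\geq 2\int\ep_0 Q$ directly rather than lower-bounding $\int\ep(s)Q$ itself; the dependence of $A$ on $\int\ep_0 Q$ that you flag is exactly the one present in the paper's choice $(A^{1/2}e^{-A/4M}+A^{-1/2})\leq(\int\ep_0 Q)^{1/2}$.
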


\begin{proof}
In view of \eqref{Bound_ep}, let $\alpha_0<\min\{\alpha_1(C_1)^{-1}, \alpha_2(C_1)^{-1}, (2C_1)^{-1}, (2C_1C_2)^{-1}, 1/2\}$, so that we can apply Lemmas \ref{Lemma-param} and \ref{H^1-control}. From \eqref{K'_A} and the definition of $M_0$, see \eqref{M_0}, we have
\begin{equation}\label{K'AM_0}
\frac{d}{ds}K_A(s) = \lambda^{1/2} \left(\left(1-\frac14\left(\frac{x_s}{\lambda}-1\right)\right)M_0 +\widetilde{R}(\ep,A)\right),
\end{equation}
where
$$
\widetilde{R}(\ep,A)={R}(\ep,A)-  \left(1-\frac14\left(\frac{x_s}{\lambda}-1\right)\right) \int \ep^2.
$$
Since $\alpha_0<\min\{(2C_1)^{-1}, (2C_1C_2)^{-1}\}$, we have $1/2\leq \lambda(s)\leq 3/2$, and using \eqref{ControlParam}, we deduce
$$
\lambda^{1/2} \left(1-\frac14\left(\frac{x_s}{\lambda}-1\right)\right)\geq \frac{1}{\sqrt{2}} \cdot  \frac34>\frac12.
$$
Moreover, from the definition of $M_0$, we also obtain
$$
M_0=2\int \ep_0 Q +\int \ep_0^2\geq 2\int \ep_0 Q.
$$
Hence, the first term in \eqref{K'AM_0} is at least
\begin{equation}\label{M_0term}
\lambda^{1/2} \left(1-\frac14\left(\frac{x_s}{\lambda}-1\right)\right)M_0\geq  \int \ep_0Q.
\end{equation}
Now we estimate the second term in \eqref{K'AM_0}. By Lemma \ref{Lemma-param}, we have
$$
\left|\frac{\lambda_s}{\lambda}\right|+\left|\frac{x_s}{\lambda}-1\right|\leq C_2\|\ep(s)\|_2.
$$
Therefore, using the inequalities \eqref{R} and \eqref{Bound_ep}, there exists a universal constant $C_6>0$, such that for $A\geq 1$ we can upper bound the second term as follows
\begin{equation}\label{Rtilda}
\lambda^{1/2} \widetilde{R}(\ep, A)\leq C_6\left(\|\ep(s)\|^2_2 +A^{1/2}\|\ep(s)\|_2\|\ep(s)\|_{L^2(y\geq A)}+A^{-1/2}\|\ep(s)\|_2\right)
\end{equation}
Note that we are at the crucial point here, where we can use the smallness of $\ep$ and a large value of $A$ to try to make all three terms in \eqref{Rtilda} small. The issue will be really in the middle term: taking a large $A$ will make $A^{1/2}$ big, while the tail of $\ep$ in $\|\ep(s)\|_{L^2(y\geq A)}$ has to balance the growth of $A^{1/2}$, which is delicate and only possible to get via monotonicity (also, with pointwise decay estimates as in the original proof in \cite{MM-KdV-instability}, but we are trying to avoid that here).

Continuing, by Lemma \ref{H^1-control} we get
$$
\|\ep(s)\|^2_{H^1}\leq C_5\left(C_1\alpha_0 \left|\int \ep_0 Q\right|+\|\ep_0\|^2_{H^1}\right),
$$
and thus, the assumption \eqref{ep_0} yields
\begin{align*}
\|\ep(s)\|^2_{H^1}\leq& C_5\left(C_1\alpha_0 \int \ep_0 Q+b_0\int \ep_0 Q \right)\nonumber \\
\leq & C_5(C_1+1) \left(\alpha_0+b_0\right)\int \ep_0 Q.
\end{align*}
From Corollary \ref{AM3}, we obtain the control on the tail of $\ep$ in $L^2$, hence, for every $A\geq 1$ we have
\begin{align}\label{ep(s)2-decay}
\|\ep(s)\|^2_{L^2(y\geq A)}\leq C_7e^{-\frac{A}{2M}}
\end{align}
for some constants $C_7>0$ and $M>0$.

Now, collecting estimates \eqref{Rtilda}-\eqref{ep(s)2-decay}, there exists $C_8>0$ such that
\begin{align*}
\lambda^{1/2} \widetilde{R}(\ep, A)\leq & C_8\left(\left(\alpha_0+b_0\right)\int \ep_0 Q +(A^{1/2}e^{-\frac{A}{4M}}+A^{-1/2})(\alpha_0+b_0)^{1/2}\left(\int \ep_0 Q\right)^{1/2}\right)\\
\leq & C_8(\alpha_0+b_0)^{1/2}\left(\int \ep_0 Q +(A^{1/2}e^{-\frac{A}{4M}}+A^{-1/2})\left(\int \ep_0 Q\right)^{1/2}\right),
\end{align*}
where in the last inequality we assume that $\alpha_0+b_0<1$, since these numbers will be chosen small enough in the sequel.

Now, let $\alpha_0, b_0>0$ sufficiently small such that  $C_8(\alpha_0+b_0)^{1/2}\leq 1/4$, and then (for $\alpha_0$ and $b_0$ fixed) let $A\geq 1$ be large enough such that
$$
(A^{1/2}e^{-\frac{A}{4M}}+A^{-1/2})\leq \left(\int \ep_0 Q\right)^{1/2},
$$
this is exactly where monotonicity helps to play against the truncation and make the term $A^{1/2} \, e^{-\frac{A}{4M}}$ not just bounded, but small.

Thus,
$$
\lambda^{1/2} \widetilde{R}(\ep, A)\leq 2C_8(\alpha_0+b_0)^{1/2}\int \ep_0 Q \leq \frac12\int \ep_0 Q,
$$
which implies, in view of \eqref{K'AM_0} and \eqref{M_0term}, that
$$
\frac{d}{ds}K_A(s)\geq \frac12 \int \ep_0 Q>0 \quad \textrm{for all} \quad s\geq 0.
$$
\end{proof}

Finally, we have all the ingredients to obtain the main result.
\begin{proof}[Proof of Theorem \ref{Theo-Inst}]
Integrating in $s$ variable both sides of inequality \eqref{Bound-K'_A}, we obtain
$$
K_A(s)\geq \frac{s}{2}\int \ep_0Q + K_A(0) \quad \textrm{for all} \quad s\geq 0.
$$
Hence,
$$
\lim_{s\rightarrow \infty}K_A(s)=\infty,
$$
which is a contradiction with \eqref{Bound-KA}, the boundedness of $K_A(s)$ for all $s>0$.
Thus, our original assumption that $Q$ is stable is not valid and this finishes the proof.
\end{proof}

\section{An alternative proof without truncation} \label{S-6}

We define
\begin{equation*}
J(s)=\int_{\R}\ep(s)F(y) \, dy.
\end{equation*}
Since we don't have any truncation now, our first goal is to show that $J(s)$ is well-defined and upper bounded\footnote{Note that with truncation the boundedness of $J_A$ was basically built in. Now we have to prove it, which we do via monotonicity bounds.} for all $s\geq 0$ under a certain choice of initial condition $\ep_0$. To this end we use Monotonicity, see Lemma \ref{AM2}. Indeed, assume that $\ep_0$ satisfies the assumptions \eqref{ep_0}-\eqref{ep_03}. Therefore, we can apply Corollary \ref{AM3} to deduce, for all $s\geq 0$, that
\begin{align}\label{Bound-J}
\nonumber
|J(s)|\leq &\int_{y\leq 0}\left|\ep(s)F(y)\right| dy+ \int_{y>0}\left|\ep(s)F(y)\right|dy\\
\nonumber
\leq &c\|\ep(s)\|_2\left(\int_{y\leq 0} e^{\delta y}dy \right)^{1/2}\!\!\!\!\! + \|F\|_{\infty}\sum_{k=0}^{+\infty}\int_{k}^{k+1}\left|\ep(s)\right|dy\\  \nonumber
\leq & c\|\ep(s)\|_2+\|F\|_{\infty}\sum_{k=0}^{+\infty}\left(\int_{k}^{+\infty}\left|\ep(s)\right|^2dy\right)^{1/2}\\\nonumber
\leq & c\|\ep(s)\|_2+c\|F\|_{\infty}\sum_{k=0}^{+\infty}e^{-\frac{k}{4M}}\\
\leq & c\alpha_0^{1/2}+c\|F\|_{\infty},
\end{align}
where in the last inequality we have used \eqref{Bound_ep}.

Next, we consider the derivative $\ds \frac{d}{ds}J(s)$. Again, from Martel-Merle \cite[Lemma 6]{MM-KdV-instability}, we have
\begin{lemma}\label{J'}
Suppose that $\ep(s)\in H^1(\R)$ with $\|\ep(s)\|_{H^1}\leq 1$ for all $s\geq 0$. Then the function $s\mapsto J(s)$ is $C^1$ and
$$
\frac{d}{ds}J=-\frac{\la_s}{2\la}\left(J-\frac14\left(\int Q\right)^2\right)+2\left(1-\frac{1}{4}\left(\frac{x_s}{\la}-1\right)\right)\int \ep Q + R(\ep),
$$
where there exists a universal constant $c>0$ such that
\begin{align}\label{R-2}
|R(\ep)|\leq & c\left(\|\ep\|^2_2+\|\ep\|^2_2\|\ep\|_{H^1}+\left(\left|\frac{x_s}{\lambda}-1\right|+\left|\frac{\lambda_s}{\lambda}\right|\right)\|\ep\|_2\right).
\end{align}
\end{lemma}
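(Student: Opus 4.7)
The plan is to mimic the proof of Lemma \ref{J'_A} with the cutoff set to $\varphi_A\equiv 1$, so that all the truncation remainders $R_1(A),\ldots,R_5(\ep,A)$ collapse to zero and only the contributions intrinsic to the virial functional survive. Differentiating under the integral and substituting \eqref{ep1-order1} produces six summands, one for each term in the $\ep_s$ equation, and each is handled by integration by parts using $F_y=\Lambda Q$, the self-adjointness of $L$ together with $L(\Lambda Q)=-2Q$, the identity $\int\Lambda Q = -\tfrac12\int Q$, and the critical-case orthogonality $(\Lambda Q,Q)=0$.

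The three mechanism-generating identities are: (i) $\int (L\ep)_y F = -\int (L\ep)\Lambda Q = -\int \ep\, L(\Lambda Q) = 2\int\ep Q$, which supplies the main $\int\ep Q$ contribution; (ii) $\int (\Lambda Q)F = \tfrac12 F(+\infty)^2 = \tfrac12\bigl(\int\Lambda Q\bigr)^2 = \tfrac18\bigl(\int Q\bigr)^2$, producing the constant $\tfrac14(\int Q)^2$ subtracted from $J$; and (iii) after writing $\Lambda\ep=\tfrac12\ep+y\ep_y$ and integrating the $y\ep_y F$ piece by parts, one obtains $\int(\Lambda\ep)F = -\tfrac12 J -\int y\ep\,\Lambda Q$, which supplies the $-\tfrac{\la_s}{2\la}J$ term. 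The remaining three pieces feed either the main term or the remainder: $\int Q_y F = -\int Q\Lambda Q = 0$ in the critical case, $\int \ep_y F = -\tfrac12\int \ep Q - \int y\ep Q_y$ combines with (i) to yield the prefactor $2(1-\tfrac14(\tfrac{x_s}{\la}-1))$ on $\int\ep Q$ plus a modulation error, and $-\int R(\ep)_y F = \int R(\ep)\,\Lambda Q$ is pure remainder.

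For \eqref{R-2} I would bound the three remainder sources exactly as in \eqref{III}: $|\!\int R(\ep)\Lambda Q| \lesssim \|\Lambda Q\|_\infty\bigl(\|\ep\|_2^2+\|\ep\|_2^2\|\ep\|_{H^1}\bigr)$ via \eqref{R(ep)}, Gagliardo--Nirenberg and $\|\ep\|_\infty\leq\|\ep\|_{H^1}\leq 1$; the modulation errors $|\tfrac{\la_s}{\la}|\,|\!\int y\ep\,\Lambda Q|$ and $|\tfrac{x_s}{\la}-1|\,|\!\int y\ep Q_y|$ are each $\lesssim \bigl(|\tfrac{\la_s}{\la}|+|\tfrac{x_s}{\la}-1|\bigr)\|\ep\|_2$ thanks to the finite weighted norms $\|y\Lambda Q\|_2$ and $\|yQ_y\|_2$ guaranteed by \eqref{prop-Q}. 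The \emph{main delicate point} is justifying the integrations by parts: since $F$ does not decay as $y\to+\infty$, boundary terms of the form $[y\ep F]$ and $[\ep F^2]$ are not automatic from $\ep\in H^1$ alone. The cleanest way around this is to recover the identity as the limit $A\to\infty$ of Lemma \ref{J'_A}, using the exponential $L^2$ tail decay of $\ep$ supplied by Corollary \ref{AM3}, which is precisely the mechanism that makes $J(s)$ well defined in \eqref{Bound-J} to begin with.
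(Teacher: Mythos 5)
Your proposal is correct and takes essentially the same route as the paper: the paper simply defers this lemma to Martel--Merle, but the computation is exactly the $\varphi_A\equiv 1$ specialization of its own proof of Lemma \ref{J'_A}, and you reproduce all the key identities ($L(\Lambda Q)=-2Q$, $F(+\infty)=\int\Lambda Q=-\tfrac12\int Q$, $(\Lambda Q,Q)=0$, and the $\Lambda\ep$ integration by parts producing $-\tfrac12 J$) with the correct bookkeeping of the remainder. Your flagged delicate point about boundary terms at $y\to+\infty$ is real, and your resolution (pass to the limit $A\to\infty$ in Lemma \ref{J'_A} using the tail decay of Corollary \ref{AM3}) is consistent with how the paper itself justifies that $J(s)$ is even finite in \eqref{Bound-J}.
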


Now define
$$
K(s)=\lambda^{1/2}(s)(J(s)-\kappa),
$$
where $\kappa$ is given by \eqref{kappa}.

Therefore, from \eqref{Bound-J} and \eqref{Bound_ep}, if $\alpha_0$ is sufficiently small, it is clear that
\begin{equation}\label{Bound-K}
|K(s)|\leq \left(\frac32\right)^{1/2}\left(c\alpha_0^{1/2}+c\|F\|_{\infty}+\kappa\right)<+\infty
\end{equation}
for all $s\geq 0$.

Now, if we obtain a strictly positive lower bound for $\ds \frac{d}{ds}K(s)$, we deduce that $\ds \lim_{s\rightarrow \infty}K(s)=\infty$ (as in the first proof of Theorem \ref{Theo-Inst} in Section \ref{S-5}) and reach a contradiction with \eqref{Bound-K}. From Lemma \ref{J'} we have
\begin{align*}
\frac{d}{ds}K =& \frac12\frac{\lambda_s}{\la^{1/2}}\left(J-\kappa\right)+ \lambda^{1/2} \frac{d}{ds}J \nonumber\\
=& \lambda^{1/2} \left(\frac{d}{ds}J+\frac{\lambda_s}{2\lambda}\left(J-\kappa\right)\right) \nonumber \\
=&\lambda^{1/2} \left(2\left(1-\frac14\left(\frac{x_s}{\lambda}-1\right)\right)\int\ep Q +R(\ep)\right).
\end{align*}
Using the definition of $M_0$, see \eqref{M_0}, we have
\begin{equation}\label{K'M_0}
\frac{d}{ds}K(s) = \lambda^{1/2} \left(\left(1-\frac14\left(\frac{x_s}{\lambda}-1\right)\right)M_0 +\widetilde{R}(\ep)\right),
\end{equation}
where
$$
\widetilde{R}(\ep)={R}(\ep)-  \left(1-\frac14\left(\frac{x_s}{\lambda}-1\right)\right) \int \ep^2.
$$
Since $\alpha_0<\min\{(2C_1)^{-1}, (2C_1C_2)^{-1}\}$, we have $1/2\leq \lambda(s)\leq 3/2$, and using \eqref{ControlParam}, we deduce
$$
\lambda^{1/2} \left(1-\frac14\left(\frac{x_s}{\lambda}-1\right)\right)\geq \frac{1}{\sqrt{2}} \cdot  \frac34>\frac12.
$$
Moreover, it is easy to see that
$$
M_0=2\int \ep_0 Q +\int \ep_0^2\geq 2\int \ep_0 Q.
$$
Thus,
\begin{equation}\label{M_0term2}
\lambda^{1/2} \left(1-\frac14\left(\frac{x_s}{\lambda}-1\right)\right)M_0\geq  \int \ep_0Q.
\end{equation}
On the other hand, by Lemma \ref{Lemma-param} we have
$$
\left|\frac{\lambda_s}{\lambda}\right|+\left|\frac{x_s}{\lambda}-1\right|\leq C_2\|\ep(s)\|_2.
$$
Therefore, using the inequality \eqref{R-2}, there exists a universal constant $C_6>0$, such that
\begin{equation}\label{Rtilda2}
\lambda^{1/2} \widetilde{R}(\ep)\leq C_6\|\ep(s)\|^2_2.
\end{equation}
Now, by Lemma \ref{H^1-control}, we deduce
$$
\|\ep(s)\|^2_{H^1}\leq C_5\left(C_1\alpha_0 \left|\int \ep_0 Q\right|+\|\ep_0\|^2_{H^1}\right),
$$
and so the assumption \eqref{ep_0} yields
\begin{align}\label{ep(s)22}
\|\ep(s)\|^2_{H^1}\leq& C_5\left(C_1\alpha_0 \int \ep_0 Q+b_0\int \ep_0 Q \right)\nonumber \\
\leq & C_5(C_1+1) \left(\alpha_0+b_0\right)\int \ep_0 Q.
\end{align}
Therefore, collecting \eqref{Rtilda2}-\eqref{ep(s)22} and \eqref{Bound_ep}, there exists $C_8>0$ such that
\begin{align*}
\lambda^{1/2} \widetilde{R}(\ep)\leq & C_8\left(\alpha_0+b_0\right)\int \ep_0 Q.
\end{align*}

Now, let $\alpha_0, b_0>0$ sufficiently small such that  $C_8(\alpha_0+b_0)^{1/2}\leq 1/4$ and then in view of \eqref{K'M_0} and \eqref{M_0term2}, we deduce that
$$
\frac{d}{ds}K(s)\geq \frac12 \int \ep_0 Q>0 \quad \textrm{for all} ~~ s\geq 0,
$$
which gives the contradiction for large $s$ and finishes the proof.

\bibliographystyle{amsplain}

\end{document}